\documentclass[12pt]{gtpart}

\usepackage{geometry}
\usepackage{amsmath,amssymb,amsfonts,amsthm}
\usepackage{graphicx}
\usepackage{hyperref}
\usepackage[all]{xy}
\usepackage{verbatim}

\def\C{\mathbb{C}}

\def\R{\mathbb{R}}
\def\Z{\mathbb{Z}}

\newcommand{\imply}{\Longrightarrow}
\newcommand{\obar}{\overline}
\newcommand{\tde}{\widetilde}

\DeclareMathOperator{\Diff}{Diff}

\DeclareMathOperator{\Id}{id}
\DeclareMathOperator{\Int}{int}

\newtheorem{theorem}{Theorem}
\newtheorem{lemma}[theorem]{Lemma}
\newtheorem{definition}[theorem]{Definition}

\theoremstyle{definition}

\begin{document}

\title{Constructing a broken Lefschetz fibration of $S^4$ with \\ a spun or twist-spun torus knot fiber}
\author{Ka Lun Choi}
\address{Department of Mathematics, University of California, Berkeley, CA 94720}
\email{klchoi@math.berkeley.edu}
\keyword{broken Lefschetz fibration}
\keyword{spun knot}
\keyword{twist-spun knot}

\begin{abstract}
	Much work has been done on the existence and uniqueness of broken Lefschetz fibrations such as those by Auroux et al., Gay and Kirby, Lekili, Akbulut and Karakurt, Baykur, and Williams, but there has been a lack of explicit examples.  A theorem of Gay and Kirby suggests the existence of a broken Lefschetz fibration of $S^4$ over $S^2$ with a 2-knot fiber.  In the case of a spun or twist-spun torus knot, we present a procedure to construct such fibrations explicitly. The fibrations constructed have no cusps nor Lefschetz singularities.
\end{abstract}

\maketitle

\section{Introduction}
\subsection{Broken Lefschetz fibrations}
	The definition of a broken Lefschetz fibration (BLF) generalizes that of a Lefschetz fibration. Besides Lefschetz singularities, a BLF can admit round singularities.  Let $X$ be a closed 4-manifold and $f$ be a map from $X$ to $S^2$ (or $D^2$).  Then, $f$ is said to have a {\it Lefschetz singularity} at a point $p\in X$ if it is locally modeled by a map $\C^2\to \C$ given by $(z,w)\mapsto zw$. And $f$ is said to have a {\it round singularity} (or a round handle) along a 1-submanifold $S^1\subset X$ if it is locally modeled by a map $S^1\times \R^3\to S^1\times \R$ given by $(\theta,x,y,z)\mapsto (\theta,x^2+y^2-z^2)$. A round singularity is often referred to as a fold with no cusps.

	Using approximately holomorphic techniques, Auroux, Donaldson and Katzarkov \cite{Auroux} showed that a closed near-symplectic 4-manifold has a singular Lefschetz pencil structure, which provides a broken Lefschetz fibration after blowing up at the base locus of the pencil.
	In \cite{Gay1}, Gay and Kirby found that every smooth closed oriented 4-manifold is a broken achiral Lefschetz fibration (BALF).  Their 4-manifold is constructed by gluing along the open book boundaries of some 2-handlebodies that have a BALF structure.  The gluing relies on Eliashberg's classification of overtwisted contact structures, and Giroux's correspondence between contact structures and open books.  The achirality, which allows Lefschetz singularities of nonstandard orientation,  was needed to match the open books.
	However, Lekili \cite{Lekili_wrinkled} discovered that the achiral condition is unnecessary by studying local models of a fibration via singularity theory.
	In the meantime, a topological proof of the existence is given by Akbulut and Karakurt \cite{Akbulut}. Another existence proof, ahead of Lekili's work, is given by Baykur \cite{Baykur} employing Saeki's work \cite{Saeki} in the elimination of definite folds.
	The uniqueness of a broken Lefschetz fibration of a 4-manifold to the 2-sphere is done by Williams \cite{Williams}.
	More recently, Gay and Kirby \cite{Gay2} \cite{Gay3} generalized the study of Morse functions to generic maps from a smooth manifold to a smooth surface, known as Morse 2-functions. The existence and uniqueness of BLFs for closed 4-manifolds is then a special case of their work.
	Theorem 1.1 in \cite{Gay1} implies that if $L$ is a closed surface in $X$ with $L\cdot L=0$, then there is a broken Lefschetz fibration from $S^4$ to $S^2$ with $L$ as a fiber.  In this paper, we explore the situation where $L$ is a spun or a twist-spun knot to obtain the following.
\begin{theorem}
\label{thm:main}
A broken Lefschetz fibration of $S^4$ over $S^2$ with a spun or twist-spun torus knot fiber can be constructed explicitly.
\end{theorem}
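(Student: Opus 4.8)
The plan is to build the fibration hemisphere by hemisphere, isolating all of the difficulty in the knot complement. Write the base as $S^2 = D^2_+ \cup_{S^1} D^2_-$ and decompose the total space as $S^4 = \nu(L) \cup_\partial C$, where $L$ is the (twist-)spun torus knot, $\nu(L)\cong S^2\x D^2$ is a tubular neighborhood (the normal bundle is trivial since $L\cdot L=0$), and $C = S^4\setminus\Int\nu(L)$ has boundary $S^2\x S^1$. Over $D^2_+$ I would simply take the normal-bundle projection $\nu(L)=S^2\x D^2\to D^2_+$, whose regular fibers are parallel copies of $L\cong S^2$ and which has no singularities at all; the central fiber over the origin is exactly $L$. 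The entire problem then reduces to extending this map across $D^2_-$ as a broken Lefschetz fibration on $C$ that restricts on $\partial C = S^2\x S^1$ to the product $S^2$-bundle and whose only singularities are round (indefinite folds).

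The content of that extension comes from the fibered structure of the complement. By Zeeman's twist-spinning theorem, the exterior $C$ of a twist-spun knot fibers over $S^1$ with fiber a once-punctured cyclic branched cover $F$ of $(S^3,K)$ and monodromy the associated deck transformation; for the untwisted spun knot the fibration has essentially product form. First I would exploit that $K=T(p,q)$ is itself a fibered knot, with an explicit open book on $S^3$ whose page is the Milnor fiber $\Sigma$ of $z_1^p+z_2^q$ (genus $(p-1)(q-1)/2$) and whose monodromy $\phi$ is periodic. Consequently the three-dimensional fiber $F$ --- a punctured Brieskorn/Seifert-fibered space $\Sigma(p,q,k)$, entirely explicit for torus knots --- carries its own fibration by surfaces coming from these pages. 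I would then compose the two nested fibrations, parametrizing $D^2_-$ so that the angular direction carries the spinning circle $S^1$ and the radial direction records the page structure of the open book inside each fiber $F$, capping the binding at the center. The regular fibers of the resulting map $C\to D^2_-$ are closed surfaces obtained by capping the pages, and the degeneration of the pages is arranged to realize precisely the indefinite-fold model $(\theta,x,y,z)\mapsto(\theta,x^2+y^2-z^2)$ along circles, so that the fiber genus changes by the standard $1$-handle surgery with no cusps and no Lefschetz critical points. Because every ingredient (the Milnor-fiber page, the periodic monodromy, the branched covers $\Sigma(p,q,k)$) is explicit for torus knots, the assembled map is explicit, which would give the fibration the theorem asserts.

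The hard part will be organizing these two fibrations into a single smooth generic map $S^4\to S^2$ whose only singularities are round. Two compatibility issues must be controlled simultaneously: the page-degeneration inside $F$ must be made to fit the indefinite-fold local model exactly --- so that crossing each fold circle effects an $S^2\rightsquigarrow T^2$-type $1$-handle surgery rather than a definite (fiber-collapsing) fold or a cusp --- and, across the equator $\partial C=S^2\x S^1$, this structure must match the trivial $S^2$-bundle inherited from $\nu(L)$, which I expect to force the genus to return to $0$ near the boundary through additional concentric fold circles. I anticipate that the final verification that the assembled total space is genuinely $S^4$, rather than some other closed $4$-manifold, will require an explicit handle/Kirby-calculus check; this bookkeeping, together with the fold-model matching and the monodromy compatibility, is the crux of the construction, whereas the explicitness the theorem claims then follows formally from the explicit torus-knot data.
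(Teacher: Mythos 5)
Your overall architecture --- fibering a neighborhood of the $2$-knot over one disk, and using Zeeman's $S^1$-bundle structure on the complement with a page/Morse structure in the radial direction over the other disk --- is exactly the skeleton of the paper's construction. But the two issues you defer to as ``the crux'' are precisely where the paper's actual content lives, and as stated your proposal has no mechanism for either. First, to get a fibration with \emph{only} fold circles you need the interval-valued function on the $3$-manifold fiber $M$ to be compatible with the monodromy $\tde{h}$: the Cerf diagram interpolating between $f$ and $f\circ\tde{h}$ must consist of folds joining critical points of equal index, with no births, deaths, or cusps, and each fold must close up into a circle after finitely many trips around the base. The paper achieves this by building the Seifert surface $L_{p,q}$ from plumbed Hopf bands and proving (Lemmas~\ref{lemma_trefoil} and~\ref{monodromy_Lpq}) that the monodromy $\tau_{\gamma_{p-2,q-1}}\cdots\tau_{\gamma_{0,0}}$ is isotopic to the flap-permuting diffeomorphism $HV$, so that $h$ literally permutes the $0$- and $1$-handles and the critical points close up into round handles of lengths $p$, $q$, $pq$, and $1$. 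Your proposal invokes the periodicity of the Milnor monodromy and the Seifert-fibered structure of $\Sigma(p,q,k)$ but never explains how a generic map to the interval is chosen so that the monodromy acts by a permutation of its critical points; also note that the radial coordinate must come from an interval-valued Morse function on $F$, not from the (circle-valued) open book angle of $F$ itself, which would have non-Morse critical behavior when composed with a height function on $S^1$.

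Second, you assert that the page degeneration can be ``arranged'' to avoid definite folds, but it cannot: any Morse function on the closed-up $3$-manifold fiber has index-$0$ critical points (one for each spun $0$-handle, i.e.\ each solid torus $S^1\times(\textrm{$0$-handle of }F^2)$), and these necessarily produce definite fold circles, which are forbidden in a broken Lefschetz fibration. Removing them is a genuine step: the paper devotes Section~\ref{roundhandle} to realizing a round $0$-handle going around the base $n$ times as a BLF over the disk via swallowtail, merge, and fold-crossing moves (Gay's trick), and this is what turns the naive fibration into an honest BLF. Finally, your worry about verifying the total space is $S^4$ is resolved not by Kirby calculus but by the Gluck-twist observation: the only ambiguity in regluing $S^2\times D^2$ is $\pi_1(\Diff(S^2))\cong\Z/2$, and Gordon's theorem says both gluings of a spun or twist-spun knot complement yield the standard $S^4$. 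Without concrete versions of the monodromy-permutation lemma and the definite-fold elimination, the proposal is an accurate outline of the problem rather than a proof.
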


\subsection{Spun knots and twist-spun knots}
	The definition of a spun knot was first introduced by Artin \cite{Artin} where a nontrivial arc of a 1-knot is spun into a 2-knot.  Let $K$ be a knot in $S^3$ and $K_S$ be the complement of a small neighborhood of a point on $K$. Choose a smooth proper embedding $f:D^1\to D^3$ with $K_S=f(D^1)$ so that $f(\partial D^1)\subset \partial D^3$ and $f(\Int(D^1))\subset \Int D^3$. The spun knot $(S^4, S^2_K)$ is obtained by spinning $(D^3,f(D^1))$ as follows.
\begin{align*}
	S^4 & = (S^1\times D^3)\bigcup_{S^1\times S^2} (D^2\times S^2) \\
	S_K^2 & = (S^1\times f(D^1))\bigcup_{S^1\times f(\partial D^1)} (D^2\times f(\partial D^1))
\end{align*}
In words, the spun knot $S_K^2$ is formed by first spinning $K_S$ into a cylinder and then capping the cylinder off with two disks.

The definition of a twist-spun knot is introduced by Zeeman \cite{Zeeman}.  Here the 3-ball with the embedded nontrivial arc rotates $k$ times as the arc spun into a cylinder. The k-twist-spun knot can be written as
\begin{align*}
\tde{S^4} &= (S^1\times D^3)\cup_\varphi (D^2\times S^2) \\
\tde{S^2_K} &=  (S^1\times f(D^1))\cup_\varphi (D^2\times f(\partial D^1))
\end{align*}
where $\varphi:S^1\times S^2\to S^1\times S^2$ is given by sending $(t,(\theta,x))$ to $(t,(\theta-kt,x))$ and $x$ represents a coordinate chart on the longitude $\theta$. Note that the map $\varphi$ can be extended to a diffeomorphism of $S^1\times D^3$ by twisting the interior of $D^3$ along with its boundary. Therefore, $(S^1\times D^3)\cup_{\Id} (D^3\times S^2)\xrightarrow{1\cup \varphi'} (S^1\times D^3)\cup_{\varphi} (D^3\times S^2)$ gives a diffeomorphism from the standard $S^4$ to $\tde{S^4}$.

\begin{lemma}[Zeeman \cite{Zeeman}]
\label{lma:spun}
The complement of a spun fibered knot in $S^4$ is a bundle over $S^1$ with fiber a 3-manifold.
\end{lemma}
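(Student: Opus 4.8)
The plan is to push the open book structure coming from fiberedness of $K$ through the spinning construction. Since $K$ is a fibered knot, its complement $S^3\setminus\nu(K)$ (here $\nu$ denotes an open tubular neighborhood) is the mapping torus of a monodromy $h\colon F\to F$ of a fiber surface $F$ with $\partial F=K$; equivalently there is a fibration $\phi\colon S^3\setminus\nu(K)\to S^1$ whose restriction to $\partial\nu(K)$ fibers the boundary torus by longitudes, that is, $S^3$ carries an open book with binding $K$ and page $F$. First I would arrange the deleted point of $K$ used to form the pair $(D^3,K_S)$ to lie in the standard collar of the binding, so that the two punctures $f(\partial D^1)=\{p,q\}$ are exactly the points where $K$ meets $\partial D^3=S^2$ and the pages meet $S^2$ in the standard family of arcs from $p$ to $q$ (the trivial open book of $S^2$).

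With this choice, $W:=D^3\setminus\nu(K_S)$ inherits a fibration $\psi\colon W\to S^1$, obtained by deleting a boundary half-disk from each page (so that its fiber is a surface $G\cong F$), and on the annulus $A:=S^2\setminus\nu\{p,q\}\cong S^1\times I$ the map $\psi$ is just the projection onto the circle factor. Following the spinning formula I would then write the complement as
\[
S^4\setminus\nu(S^2_K)=\bigl(S^1\times W\bigr)\cup_{S^1\times A}\bigl(D^2\times A\bigr),
\]
and define $\Phi\colon S^4\setminus\nu(S^2_K)\to S^1$ piecewise by $(s,w)\mapsto\psi(w)$ on $S^1\times W$ and $(z,(\alpha,t))\mapsto\alpha$ on $D^2\times A$. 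On the overlap $S^1\times A=\partial D^2\times A$ both formulas are the projection onto the circle factor of $A$, so $\Phi$ is well defined; over a regular value its fiber is $(S^1\times G)\cup(D^2\times I)$, a compact $3$-manifold whose boundary is the meridian $2$-sphere of $\partial\nu(S^2_K)$ (its interior for the open complement). Hence $\Phi$ exhibits the complement as a bundle over $S^1$ with $3$-manifold fiber.

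The step I expect to be the main obstacle is the gluing compatibility. I must verify that the open book fibration of $S^3\setminus\nu(K)$ genuinely restricts to the product projection on the boundary annulus $A$, and that deleting the ball around the chosen point leaves an honest fibration over all of $S^1$ rather than over an arc. This is exactly where fiberedness of $K$ is indispensable and where a standard ball near the binding does the work: it forces the two piecewise definitions of $\Phi$ to agree on $S^1\times A$ while making the capping piece $D^2\times A$ contribute trivial monodromy. Once this is in place, checking that $(S^1\times G)\cup(D^2\times I)$ is a smooth compact $3$-manifold, and if desired identifying it as the spun page, is routine bookkeeping that the statement does not require.
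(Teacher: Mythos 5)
Your proposal is correct and follows essentially the same route as the paper: decompose the complement as $(S^1\times (D^3\setminus K_S))\cup (D^2\times (S^2\setminus\{p,q\}))$, push the fibration of the punctured fibered-knot complement through the first factor, use the standard (trivial) open book of $S^2$ on the boundary annulus to define the map on the capping piece, and check agreement on the overlap. The only cosmetic differences are that you work with closed tubular-neighborhood complements and stop short of computing the monodromy, which the statement does not require.
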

We will give a brief account of how the bundle structure appears. A more detailed proof is in section~\ref{complement}.
Following the discussion earlier, we can express the complement $X$ of $S_K^2$ in $S^4$ as
\begin{align*}
X &= S^4\setminus S_K^2 = S^1\times (D^3\setminus K_S) \bigcup D^2\times (S^2\setminus K_S)
\end{align*}
If the knot $K$ is fibered, there is a map $\sigma$ from $D^3\setminus K_S\to S^1$ with fiber a surface $F^2$ whose closure is a Seifert surface of $K$. Note that the boundary of $F^2$ is a trivial arc on $\partial D^3$.  Let $h$ be the monodromy of this bundle.  Therefore,
\begin{align*}
X &= S^1\times (S^1\times_h F^2) \bigcup D^2\times (S^1\times_h \partial F^2) = S^1\times_{\widetilde{h}} (S^1\times F^2 \bigcup D^2\times \partial F^2)
\end{align*}
where $\widetilde{h}$ is the map $h$ extended as identity over the first $S^1$ factor in $S^1\times (S^1\times_h F^2)$ and as identity over the $D^2$ factor in $D^2\times (S^1\times_h \partial F^2)$.  Therefore, the 3-manifold $M$ in the lemma is $(S^1\times F^2) \bigcup (D^2\times \partial F^2)$, and the monodromy of the bundle is $\widetilde{h}$.

A similar statement is true for the complement of a twist-spun knot.
\begin{lemma}[Zeeman \cite{Zeeman}]
\label{lma:twist-spun}
The complement of a twist-spun fibered knot in $S^4$ is a bundle over $S^1$ with fiber a 3-manifold.
\end{lemma}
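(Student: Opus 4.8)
The plan is to repeat the decomposition used for the spun knot, but to carry the twisting map $\varphi$ through the re-association and absorb it into the choice of base circle. First I would write the complement as
\[
\tde X \;=\; \tde{S^4}\setminus\tde{S^2_K}\;=\; S^1\x(D^3\setminus K_S)\;\cup_\varphi\; D^2\x(S^2\setminus K_S),
\]
and use the fiberedness of $K$ to get $\sigma\colon D^3\setminus K_S\to S^1$ with fiber $F^2$ and monodromy $h$, so that $D^3\setminus K_S=S^1\x_h F^2$ and, on the boundary, $S^2\setminus K_S=S^1\x\partial F^2$ where $\partial F^2$ is the arc and $h|_{\partial F^2}=\Id$. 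The geometric observation that makes everything work is that the base $S^1$ of this boundary mapping torus is exactly the longitude $\theta$, i.e. rotation about the axis through the two punctures $f(\partial D^1)$, and this is precisely the rotation performed by $\varphi$; after arranging $\sigma$ so that its boundary monodromy direction is this rotation, the map $\varphi\colon(\theta,x)\mapsto(\theta-kt,x)$ is identified with the monodromy flow of $\sigma$ restricted to $S^2\setminus K_S$.

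Next I would build the projection $\pi\colon\tde X\to S^1$. On the piece $D^2\x(S^2\setminus K_S)$ I take $\pi$ to be the longitude coordinate $\theta$, i.e. $\sigma|_{\partial D^3}$ on the second factor; since this is independent of the $D^2$-coordinate it extends over the disk with no trouble. On the piece $S^1\x(D^3\setminus K_S)$ I take the \emph{diagonal} map $\pi(t,p)=\sigma(p)+kt$. The matching check is then that, under $\varphi$, the shift $\theta\mapsto\theta-kt$ on the boundary cancels the extra $+kt$ term, so the two definitions agree on the glued region and assemble to a well-defined circle-valued map. A short computation shows $d\pi\neq 0$, so $\pi$ is a proper submersion and hence, by Ehresmann's theorem, a fiber bundle over $S^1$.

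The main obstacle — and the reason the naive approach fails — is an obstruction located at the core of the $D^2$: if one tried instead to keep $\pi=\theta$ on the first piece and to correct it on $D^2\x(S^2\setminus K_S)$, the required correction would restrict to a degree-$k$ map $\partial D^2\to S^1$, which does not extend over $D^2$. The diagonal choice $\sigma(p)+kt$ sidesteps this by absorbing the twist into the spin coordinate $t$ rather than the disk coordinate. Identifying the fiber then amounts to re-associating the mapping tori: over a point of $S^1$ the first piece contributes the mapping torus $S^1\x_{h^{k}}F^2$ of the $k$-th power of $h$, while the second contributes $D^2\x\partial F^2$, glued along $S^1\x\partial F^2$. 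Hence the fiber is the 3-manifold
\[
M_k \;=\; \bigl(S^1\x_{h^{k}}F^2\bigr)\;\cup\;\bigl(D^2\x\partial F^2\bigr),
\]
which for $k=0$ recovers the 3-manifold $M$ of the spun case. The one point requiring care is the bookkeeping that produces $h^{k}$ (rather than $h$) as the gluing on the $F^2$-factor, including the precise power and sign; I would verify this by tracking the two generators of the deck group of $S^1\x(S^1\x_h F^2)$ against the level sets of $\sigma+kt$, exactly as in the spun computation but with the extra shift.
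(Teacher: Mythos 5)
Your construction is essentially the paper's own proof: the same decomposition of the complement, the same circle-valued map (the diagonal $\sigma(p)\mp kt$ on the mapping-torus piece and the longitude $\theta$ on the $D^2\times(S^2\setminus K_S)$ piece), and the same identification of the fiber as the punctured $k$-fold cyclic branched cover, since your $\bigl(S^1\times_{h^k}F^2\bigr)\cup\bigl(D^2\times\partial F^2\bigr)$ is exactly the paper's $\bigl(\bigcup_{j}M^3_j/\sim\bigr)\cup(D^2\times\partial F^2)$. Two small cautions: with the paper's convention $\varphi(t,(\theta,x))=(t,(\theta-kt,x))$ the matching choice is $\sigma(p)-kt$, so on the overlap both definitions equal $\theta-kt$ (the terms do not ``cancel'' to give $\theta$); and since the complement is an open manifold the projection is not proper, so Ehresmann's theorem does not apply verbatim --- local triviality should instead be exhibited directly from the flow $\rho_s$ of the open-book fibration $\sigma$, which is how the paper produces both the trivializations and the monodromy.
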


\subsection{Overview of the construction}
By Lemma~\ref{lma:spun}, a 4-sphere can be given an open book structure with a spun fibered knot $S_K^2$ as its binding. Following a line of reasoning in \cite{Gay2}, we first define a map $p:S^4\to S^2$ sending $S_K^2$ to the north pole of the base $S^2$.  Let $t\in S^1$ be a chart on the equator and $x\in[0,1]$ be a chart on a longitude with $0$ at the south pole. The complement $X$ of $S_K^2$ is a bundle over $S^1$ with fiber some 3-manifold $M$ and monodromy $\tde{h}$. We can represent $X$ as a mapping torus $S^1\times_{\tde{h}} M$. Choose a Morse function $f$ on $M$ mapping into $[0,1]$ with boundary fiber $\partial M=S^2_K$ at $1$ and with no critical values at $0$.
Note that $f\circ \tde{h}$ is homotopic to $f$. So, there is a Cerf diagram representing the homotopy. 
Define the map $p$ on $X=S^1\times_{\tde{h}} M$ by $p(t,y)=(t,f(y))$ for $t\in[0,2\pi-\delta]$ and fit in the Cerf diagram for $t\in[2\pi-\delta,2\pi]$ sending the lower edge of the diagram to the south pole and the upper edge to the north pole.

In the case of torus knot, it turns out that we can find a Morse function $f$ so that the monodromy only permutes critical points within the same index class, and a Cerf diagram consists of only folds (definite or indefinite) joining critical points of the same index at the two sides according to the monodromy.

Then, an index 1-or 2-handle of the 3-manifold $M$ gives rise to an indefinite fold in $[0,2\pi-\delta]\times M\subset X$.  The two ends match up to some critical points at the two sides of the Cerf diagram. The monodromy determines how they are joined up inside the diagram.  Similarly, an index 0-handle gives rise to a definite fold.  Since the definition of a broken Lefschetz fibration does not allow definite folds, isotopy moves in section~\ref{roundhandle} are used to get rid of them.

Note that there are two ways to glue the 2-knot to its complement because $\pi_1(\Diff(S^2))\cong \Z/2$ whose non-trivial element corresponds to the Gl\"uck's construction.  But Gordon \cite{Gordon} showed that for a spun or twist-spun knot, the result is still the standard 4-sphere.

\newpage
\section{The structure of the complement of a spun 2-knot}
\subsection{The structure of a spun knot complement}
\label{complement}
Let $K$ be a fibered knot in $S^3$.  There exists a fibration $S^3\setminus K\to S^1$ whose fiber is the interior of a Seifert surface of $K$.  Let $\varphi$ be the monodromy of this fibration.  We can think of the embedding $K_S=f(D^1)\subset D^3$ discussed earlier as the complement of a small enough open ball neighborhood of a point of $K$ in $S^3$.  By deleting this open ball from $S^3$, we obtain a fibration $\sigma:D^3\setminus K_S\to S^1$, with fiber a half-open surface $F^2$ whose closure is diffeomorphic to a Seifert surface of $K$, and with monodromy $h$ isotopic to $\varphi$ when restricted to $F^2$.  See figure\ref{trefoil_arc} for an example of a trefoil knot $K$ where $F^2$ is a half-open surface which contains the thickened arc but not the thinner arc.
\begin{figure}[hbtp]
\centering
\includegraphics[scale=0.39]{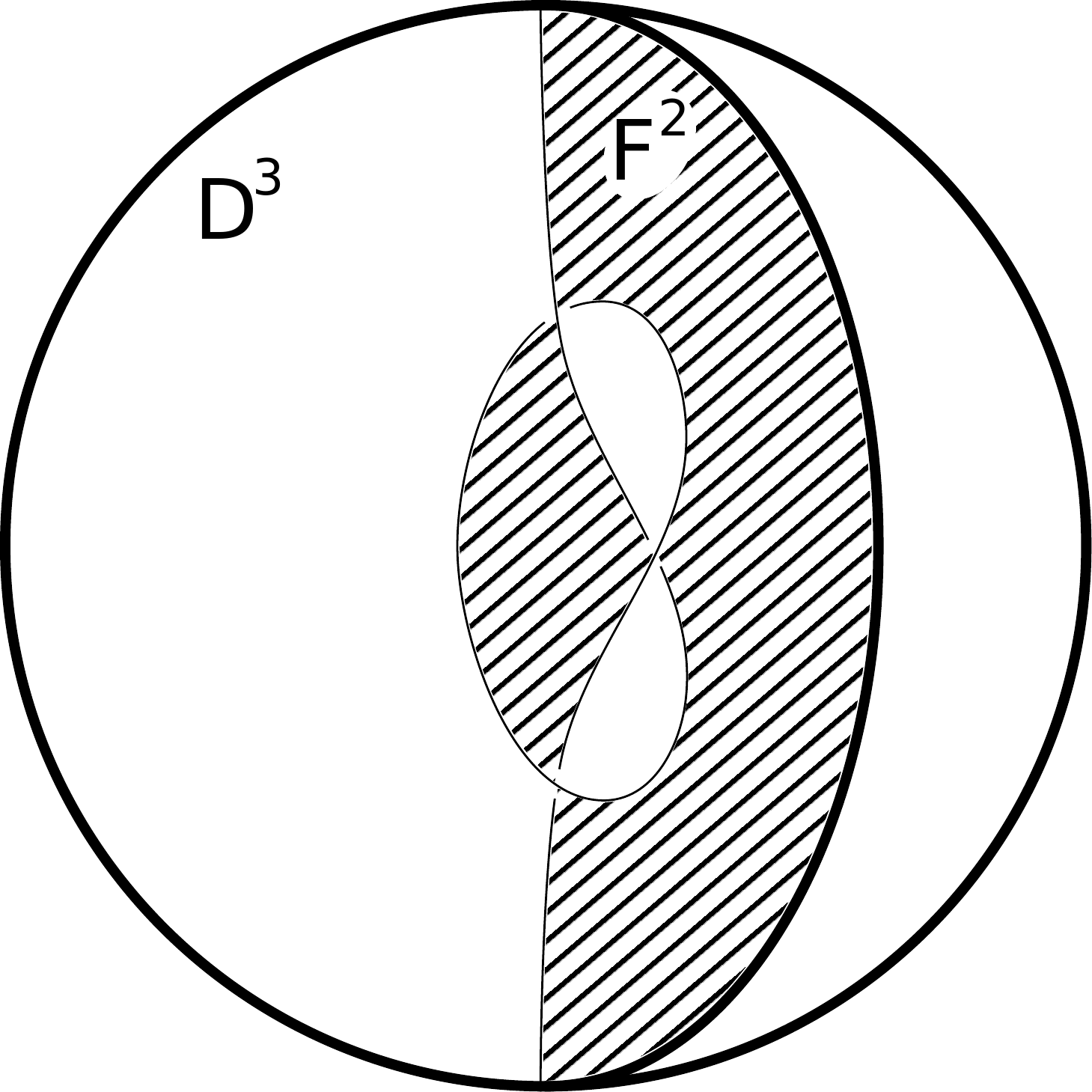}
\caption{A nontrivial arc of the trefoil $K$ embedded in $D^3$}
\label{trefoil_arc}
\end{figure}

\newtheorem*{lma:spun}{Lemma \ref{lma:spun}}
\begin{lma:spun}[Zeeman \cite{Zeeman}]
The spun knot complement $X^4=S^4\setminus S^2_K$ is a bundle over $S^1$ with fiber
\[
(S^1\times F^2)\bigcup_{\Id} (D^2\times \partial F^2),
\]
where the gluing map $\Id$ is the identity map on the boundary $S^1\times \partial F^2$,
and its monodromy $\tde{h}$ is given by
\begin{align*}
\tde{h}|_{S^1\times F^2} &:(t,y)\mapsto (t,h(y)) \\
\tde{h}|_{D^2\times \partial F^2} &:((r,t),y)\mapsto ((r,t),h(y)).
\end{align*}
\end{lma:spun}

\begin{proof}
The complement $X^4=S^4-S_K^2$ of the spun knot $S_K^2$ in the 4-sphere is
\begin{align*}
	X^4 &= S^1\times (D^3-f(D^1)) \bigcup_{\tde{\Id}} D^2\times (S^2-f(\partial D^1))
\end{align*}
where the gluing map $\tde{\Id}$ is the identity map on the boundary $S^1\times (S^2-f(\partial D^1))$.

Let $\tde{\sigma}:X^4\to S^1$ be defined as follow.
\begin{align*}
	\tde{\sigma}|_{S^1_t\times (D^3-f(D^1))} &: (t,y)\mapsto \sigma(y)\\
	\tde{\sigma}|_{D^2_{(r,t)}\times (S^2-f(\partial D^1))} &: ((r,t),y)\mapsto \sigma(y).
\end{align*}
Recall that $\sigma:D^3-f(D^1)\to S^1$ is a fiber bundle with page $F^2$ and monodromy $h$.  Then, it follows that $\tde{\sigma}$ is also a fiber bundle over $S^1$.  A regular fiber $\tde{F}^2_{\sigma_0}=\tde{\sigma}^{-1}(\sigma_0)$ is given by
\begin{align*}
(\tde{\sigma}|_{S^1_t\times (D^3-f(D^1))})^{-1}(\sigma_0) &= S^1\times(\sigma|_{D^3-f(D^1)})^{-1}(\sigma_0) \cong S^1\times F^2 \\
(\tde{\sigma}|_{D^2_{(r,t)}\times (S^2-f(\partial D^1))})^{-1}(\sigma_0) &= D^2\times(\sigma|_{S^2-f(\partial D^1)})^{-1}(\sigma_0) \cong D^2\times \partial F^2
\end{align*}
which are glued together via $g$ as $\tde{F}^2_{\sigma_0}\cong (S^1\times F^2)\cup_g (D^2\times \partial F^2)$.

The fiber bundle $\sigma:D^3-f(D^1)\to S^1$ also gives us an isotopy $\rho_s:D^3-f(D^1)\to D^3-f(D^1)$ such that $\sigma\circ\rho_s=\sigma+s$. So it maps a page to another page as $s$ varies. Then its monodromy is $h=\rho_{2\pi}|_{\sigma^{-1}(0)}$.
Let $\tde{\rho}_s:X^4\to X^4$ be an isotopy on $X^4$ defined by
\begin{align*}
\tde{\rho}_s|_{S^1\times (D^3-f(D^1))} &: (t,y)\mapsto (t,\rho_s(y)) \\
\tde{\rho}_s|_{D^2\times (S^2-f(\partial D^1))} &: ((r,t),y)\mapsto ((r,t),\rho_s(y)).
\end{align*}
And we have $\tde{\sigma}\circ\tde{\rho}_s=\tde{\sigma}+s$.  Therefore, the monodromy of $\tde{\sigma}$ is
\begin{align*}
\tde{h} &= \tde{\rho}_{2\pi}|_{\tde{\sigma}^{-1}(0)} \\
\tde{h}|_{S^1\times F^2_0}(t,y) &= (t,\rho_{2\pi}|_{F^2_0}(y))=(t,h(y)) \\
\tde{h}|_{D^2\times \partial F^2_0}(t,y) &=((r,t),\rho_{2\pi}|_{\partial F^2_0}(y))=((r,t),h(y)).
\end{align*}
\end{proof}

\subsection{The structure of a twist-spun knot complement}
\label{twist-complement}
\newtheorem*{lma:twist-spun}{Lemma \ref{lma:twist-spun}}
\begin{lma:twist-spun}[Zeeman \cite{Zeeman}]
For $k\neq 0$, the $k$-twist-spun knot complement $X^4=\tde{S^4}\setminus \tde{S^2_K}$ is a bundle over $S^1$ with fiber a punctured $k$-fold cyclic branched covering of $K$. Its 3-manifold fiber can be identified as
\begin{align*}
\Big(\bigcup_{j=0}^{k-1} M^3_j \Big/\sim \Big) \bigcup (D^2\times \partial F^2)
\end{align*}
where $M^3_j=[j,j+1]\times F^2$, and $\sim$ represents the gluing data $M^3_j\ni (j+1,y)\sim(j,h(y))\in M^3_{j+1}$ for $j\in \Z/k\Z$, and $F^2$ is the half-open Seifert surface of the knot $K$ as in section~\ref{complement}. The monodromy of the bundle sends $M^3_j$ to $M^3_{j-1}$. As a remark, for $k=0$, it is the case in lemma~\ref{lma:spun} since a 0-twist-spun-knot is a spun-knot. For $k=1$, the 3-manifold fiber is a 1-fold cyclic branded covering of $K$, and so its boundary is an unknotted 2-sphere.
\end{lma:twist-spun}

\begin{proof}
The complement of a $k$-twist-spun knot is
\begin{align*}
X &= \tde{S^4}\setminus \tde{S_K^2} \\
&= S^1\times (D^3\setminus f(D^1) \cup_\varphi D^2\times (S^2\setminus f(\partial D^1)) \\
&= S^1_t\times (S^1_\theta\times_h F^2_{t,\theta})\cup_\varphi D^2_{(r,t)}\times (S^1_\theta\times_h \partial F^2_{t,\theta})
\end{align*}
where $\varphi:S^1\times S^2\to S^1\times S^2$ is given by $(t,(\theta,x))\mapsto (t,(\theta-kt,x))$, and $x$ is the coordinate on a longitude.

Let $\tde{\sigma}:X\to S^1$ be defined by
\begin{align*}
\tde{\sigma}|_{S^1\times (D^3\setminus f(D^1))}:(t,y)\mapsto \sigma(y)-kt \\
\tde{\sigma}|_{D^2\times (S^2\setminus f(\partial D^1))}:((r,t),(\theta,x)))\mapsto \theta
\end{align*}
This is a fiber bundle because it agrees with the gluing map and is locally trivial. Its fiber above $\sigma_0$ is given by
\begin{align*}
\tde{\sigma}^{-1}|_{S^1\times (D^3\setminus f(D^1))}(\sigma_0) &= \bigcup_{t\in S^1} \sigma^{-1}(\sigma_0+kt)=\bigcup_{t\in S^1} F^2_{t,\sigma_0+kt} \\
\tde{\sigma}^{-1}|_{D^2\times (S^2\setminus f(\partial D^1))}(\sigma_0) &= D^2_{(r,t)}\times \partial F^2_{t,\sigma_0}.
\end{align*}

We can define an isotopy $\tde{\rho}_s:X\to X$ by
\begin{align*}
\tde{\rho}_s|_{S^1\times(D^3\setminus f(D^1))} &:(t,y)\mapsto (t-s,y) \\
\tde{\rho}_s|_{D^2\times(S^2\setminus f(\partial D^1)} &:((r,t),(\theta,x))\mapsto ((r,t-s),(\theta+ks,x))
\end{align*}
Then we have the following commutative diagram
\[
\xymatrix{
X\ar[r]^{\tde{\rho}_s} \ar[d]_{\tde{\sigma}} & X \ar[d]_{\tde{\sigma}} \\
S^1\ar[r]^{\psi_s} & S^1
}
\]
where $\psi_s(\theta)=\theta+ks$. It is because, 
on $S^1\times(D^3\setminus f(D^1))$, we have
\begin{align*}
\psi_s\circ \tde{\sigma}(t,y) &=\psi_s(\sigma(y)-kt)=\sigma(y)-kt+ks \\
\tde{\sigma}\circ \tde{\rho}_s(t,y) &= \tde{\sigma}(t-s,y)=\sigma(y)-kt+ks
\end{align*}
and, on $D^2\times(S^2\setminus f(\partial D^1))$, we have
\begin{align*}
\psi_s\circ \tde{\sigma}((r,t),(\theta,x)) &=\tde{\psi}_s(\theta)=\theta+ks \\
\tde{\sigma}\circ \tde{\rho}_s((r,t),(\theta,x)) &= \tde{\sigma}((r,t-s),(\theta+ks,x))=\theta+ks
\end{align*}
Therefore, the monodromy of this bundle is $\tde{h}=\tde{\rho}_{2\pi/k}$. That is
\begin{align*}
\tde{\rho}_{2\pi/k}|_{S^1\times(D^3\setminus f(D^1))}(t,y) &=(t-2\pi/k,y) \\
\tde{\rho}_{2\pi/k}|_{D^2\times(S^2\setminus f(\partial D^1))}((r,t),(\theta,x)) &=((r,t-2\pi/k),(\theta,x))
\end{align*}
Now consider $\bigcup_{t\in S^1} F^2_{t,\sigma_0+kt}$ which is part of the fiber above $\sigma_0$.
Let $q:\bigcup_{t\in S^1} F^2_{t,\sigma_0+kt}\to S^1_t \times_h F^2_{0,\sigma_0+t}$ be defined by $(t,y)\mapsto (kt,\tde{\rho}_{t}(y))$. It follows that $\bigcup_{t\in S^1} F^2_{t,\sigma_0+kt}$ is a $k$-fold unbranched covering of the knot complement $S^1\times_h F^2\cong D^3\setminus f(D^1)$.
After gluing in $D^2\times\partial F^2$, the fiber is a punctured $k$-fold cyclic branched covering of $K$.

Note that we can express
\begin{align*}
\bigcup_{t\in S^1} F^2_{t,\sigma_0+kt}
&= \bigcup_{j=0}^{k-1}\bigcup_{t\in[2\pi j/k,2\pi (j+1)/k]} F^2_{t,\sigma_0+kt}.
\end{align*}
Let $M^3_j=\bigcup_{t\in[2\pi j/k,2\pi (j+1)/k]} F^2_{t,\sigma_0+kt}$. Then the monodromy $\tde{h}$ sends $M^3_j$ to $M^3_{j-1}$ because $\tde{h}(t,y)=(t-2\pi/k,y)$.
The unbranched covering can be expressed as
\begin{align*}
\bigcup_{j=0}^{k-1} M^3_j\Big/\sim &\cong \bigcup_{j=0}^{k-1} [2\pi j/k,2\pi (j+1)/k]\times F^2 \Big/\sim
\end{align*}
where $\sim$ represents the gluing data $M^3_j\ni (2\pi (j+1)/k,y)\sim(2\pi j/k,h(y))\in M^3_{j+1}$ for $j\in \Z/k\Z$.
\end{proof}

\newpage
\section{Singularities}
\subsection{Cerf theory}
\label{Cerf}
Cerf \cite{Cerf} showed that if $f_t:M \to I$ is a 1-parameter family of smooth functions such that $f_0,f_1$ are Morse functions, then $f_t$ is Morse for all but finitely many points of $t\in[0,1]$. A Cerf diagram represents a map from $M\times I$ to $I^2$ given by $(t,f_t)\in I^2$. On the two vertical sides of the diagram, we label a critical value by its index. A typical Cerf diagram consists of folds or cusps. A fold represents a 1-parameter family of critical values. A cusp occurs at some $t_0$ where $f_{t_0}$ fails to be Morse.

We will often represent an indefinite fold by a solid arc together with an arrow joining a vanishing cycle on a regular fiber to the fold; similarly, we will often represent a definite fold by a dotted arc together with an arrow joining a vanishing sphere to the fold, see figure~\ref{folds}.
\begin{figure}[hbtp]
\centering
\includegraphics[scale=0.87]{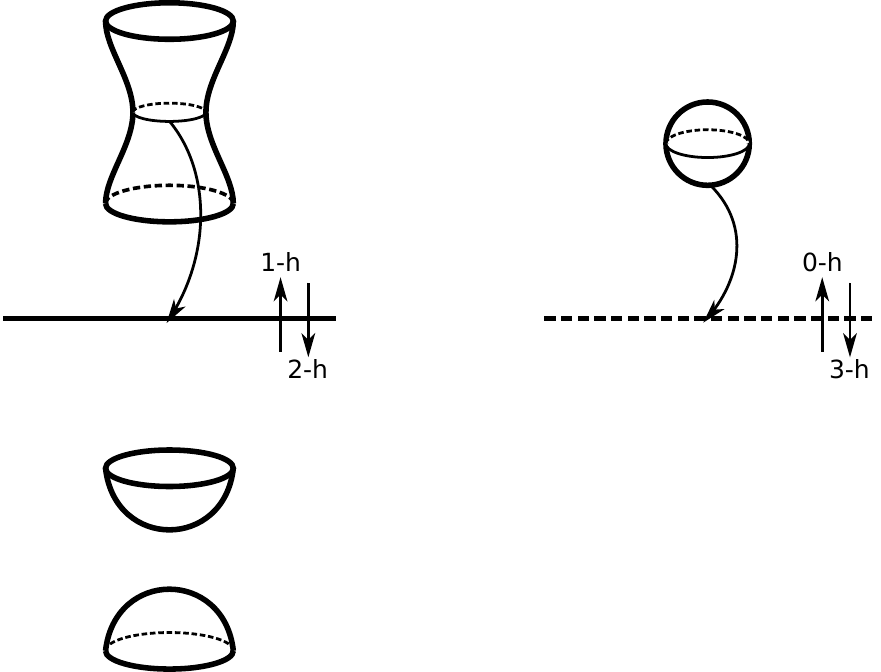}
\caption{Regular fibers above and below a fold}
\label{folds}
\end{figure}

A cusp may involve definite or indefinite folds. An indefinite cusp singularity has local model $\R^4\to \R^2$ given by $(t,x,y,z)\mapsto (t,x^3-3xt+y^2-z^2)=:(t,s)$. The critical points of this map form an arc $x^2=t,y=0,z=0$ in $\R^4$. The critical values form a cusp curve $4t^3=s^2$ in $\R^2$. It involves two indefinite folds coming together at a cusp point, see the left diagram of figure~\ref{cusps}. The other kind of cusp involves a definite and indefinite fold, see the right diagram of figure~\ref{cusps}.
\begin{figure}[hbtp]
\centering
\includegraphics[scale=0.87]{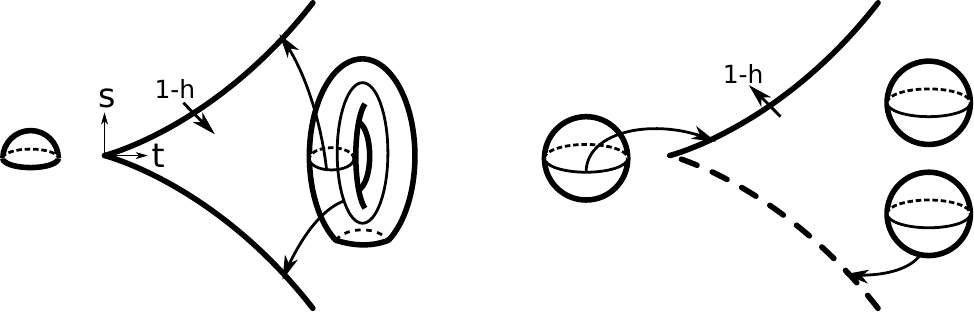}
\caption{Cusps}
\label{cusps}
\end{figure}

Via singularity theory \cite{Gay2}, there are three kind of homotopies that can be made to a Cerf diagram. They are local modifications/moves, see figure~\ref{moves}:
\begin{itemize}
\item[a.] (Swallowtail) For a fold, we can add to it a swallowtail.
\item[b.] (Birth) A pair of canceling folds with two cusps can be introduced.
\item[c.] (Merge) Two cusps can be merged to form two separate folds.
\item[d.] (Unmerge) A pair of canceling folds can be unmerged into two cusps.
\end{itemize}
\begin{figure}[hbtp]
\centering
\includegraphics[scale=0.58]{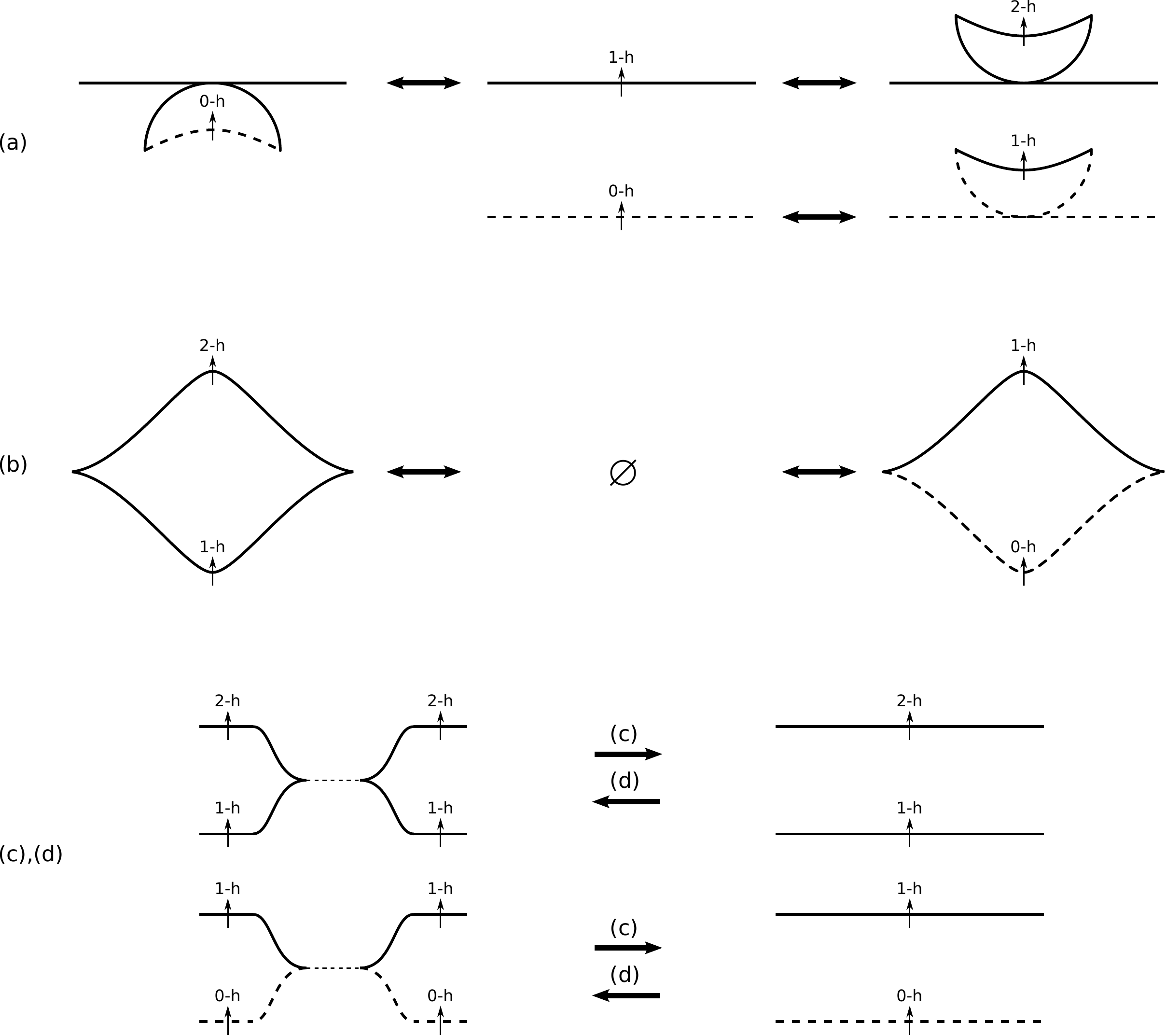}
\caption{Local moves in a Cerf diagram}
\label{moves}
\end{figure}

\subsection{Round handles}
\label{roundhandle}
A round singularity in a BLF has local model $S^1\times \R^3 \to S^1\times \R$ given by $(\theta,x,y,z)\mapsto (\theta,x^2+y^2-z^2)=:(\theta,\mu)$ where $(\theta,\mu)$ are coordinates on $S^1\times\R$.  In other words, it is an indefinite folds with two ends connected. Clearly, $\mu(x,y,z)=x^2+y^2-z^2$ is a Morse function of index 1.
Let $M_\epsilon=\mu^{-1}((-\infty,\epsilon])$ for some $\epsilon>0$.  It follows that $M_\epsilon$ is diffeomorphic to $M_{-\epsilon}$ with a 3-dimensional 1-handle attached.  Therefore, a round singularity can be considered as an addition of a round 1-handle (a $S^1$-family of 1-handle) to the side with $\mu<0$.  If we turn it upside down, we can think of it as an addition of a round 2-handle to the side with $\mu>0$.

A round 0-handle $S^1\times D^3$ can be realized as a BLF over a disk $D^2$ as observed by David Gay. Let us recall the construction.
We first realize it as a fibration over a disk with one definite circle as depicted at the top left corner of figure~\ref{round_0-handle_def1}. 
To get rid of this, we use the moves in section~\ref{Cerf}. We start by introducing two swallowtails.  Then, we pass the two definite folds over each other, which corresponds to switching the locations of the two extrema of some Morse function.  Next, we pass the two indefinite folds over each other, which corresponds to sliding the index 1 handle over the index 2 handle.  Finally, we get rid of the two swallowtails, leaving us a BLF.
\begin{figure}[hbtp]
\centering
\includegraphics[scale=0.4]{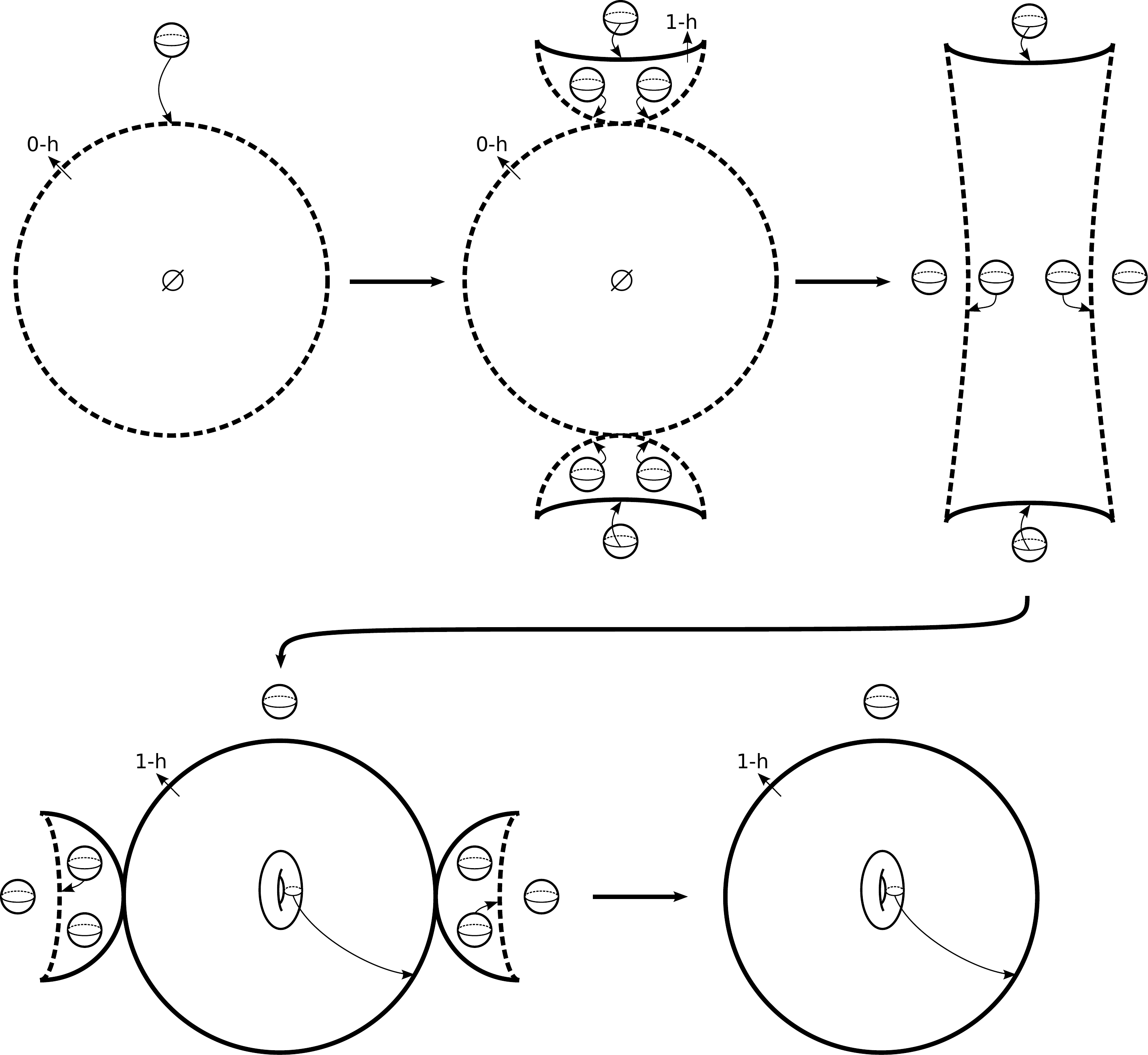}
\caption{$S^1\times D$ as a fibration with a definite fold}
\label{round_0-handle_def1}
\end{figure}

In the next section, we will need a BLF of a round $0$-handle that goes around the base $n$ times.  The case with $n=3$ is shown in figure~\ref{round_0-handle_def3}.  First we introduce a swallowtail in the innermost arc. Then we pass the two definite folds over each other.  Next, we merge the two beaks giving an indefinite circle with a sphere fiber inside. Note that the vanishing cycle here splits the sphere fiber into two spheres at the indefinite fold, and there is a $\Z/2$ monodromy inherited before the merging of the peaks.   Now, we can move the indefinite circle outside picking up an extra sphere fiber.  Repeating the same procedure to the definite fold with one less turns, we arrive at a fibration with one definite fold and two indefinite circles.  Finally, we use Gay's move to get rid of the definite circle and arrive at a BLF.  The general case is similar.
\begin{figure}[hbtp]
\centering
\includegraphics[scale=0.32,angle=90]{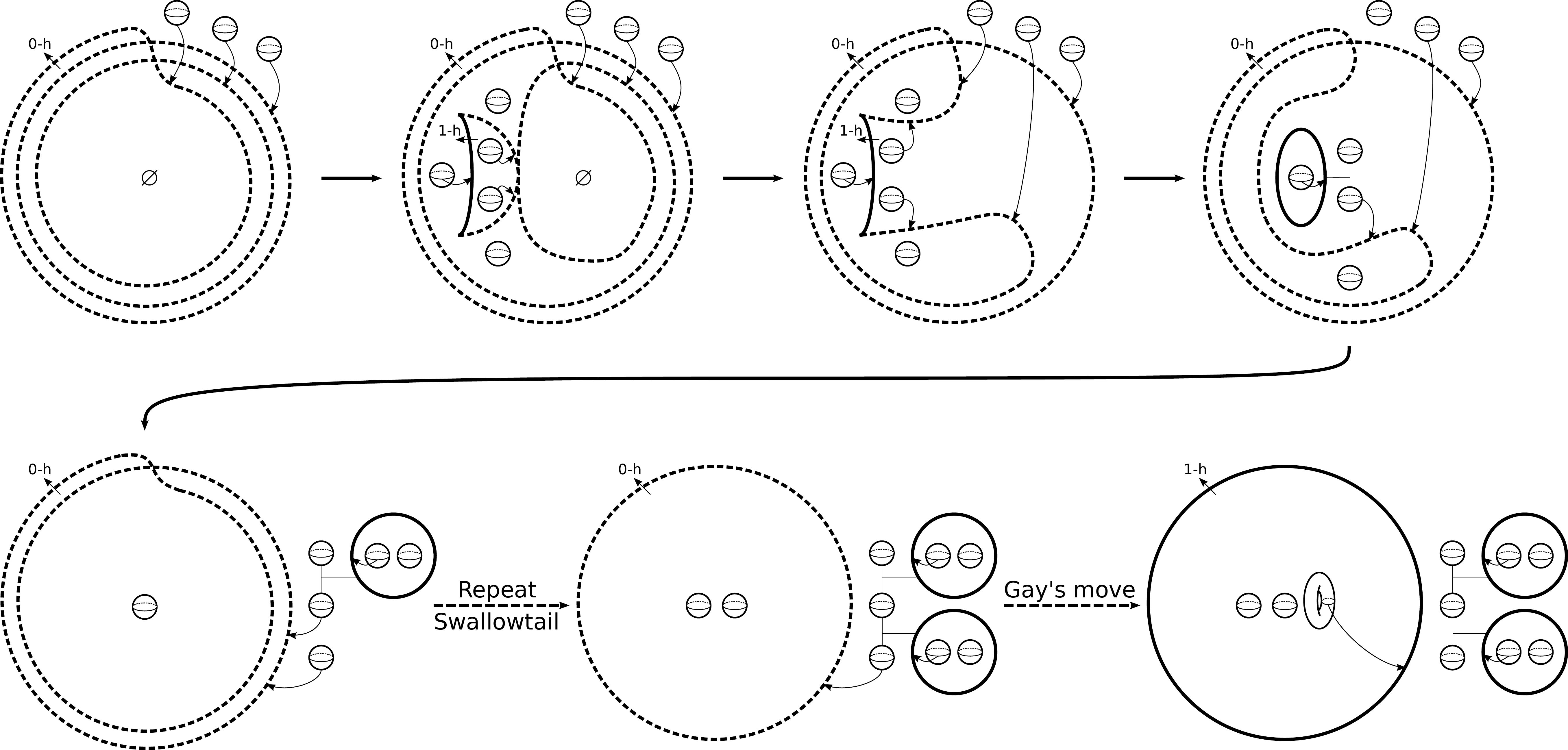}
\caption{A round $0$-handle that goes around the base $3$ times}
\label{round_0-handle_def3}
\end{figure}

\newpage
\section{BLFs of $S^4$ with certain 2-knot fiber}
\subsection{A BLF of $S^4$ with a spun trefoil knot fiber}
\label{trefoil}
With the same notations in section \ref{complement}, let $K$ be a right handed trefoil knot.
By Lemma \ref{lma:spun}, the complement $X^4$ of the spun knot $S^2_K$ is a bundle over $S^1$ with fiber $M^3=(S^1\times F^2)\cup_{\Id} (D^2\times \partial F^2)$ and monodromy $\tde{h}$.  First we will get a handle decomposition of $M^3$ such that the monodromy $\tde{h}$ sends an index n-th handle to another index n-th handle via a permutation. Therefore, $\tde{h}$ acts on the set of index $n$-th handles of $M^3$. The handle decomposition will give us a Morse function $f$ on $M^3$, and the monodromy provides a Cerf diagram which consists of folds joining the index $n$-th critical points of $f$ to that of $f\circ\tde{h}$ according to the permutation.  Since a permutation is of finite order, a fold corresponding to a critical point will run through an orbit of the action $\tde{h}$ and form a round handle. Each distinct orbit corresponds to a round handle. To get a genuine BLF, we can get rid of the definite folds with the construction shown in section \ref{roundhandle}.

Recall that $\obar{F^2}$ is diffeomorphic to a Seifert surface of $K$ in $S^3$, and that the monodromy $h$ of the fibration $S^3-K\to S^1$ is the composition of two right-handed Dehn twists along the two curves $\gamma_1,\gamma_2$ as depicted in figure~\ref{seifert_trefoil}.  After an isotopy, we arrive at the surface on the right hand side.
\begin{figure}[hbtp]
\centering
\includegraphics[scale=0.6]{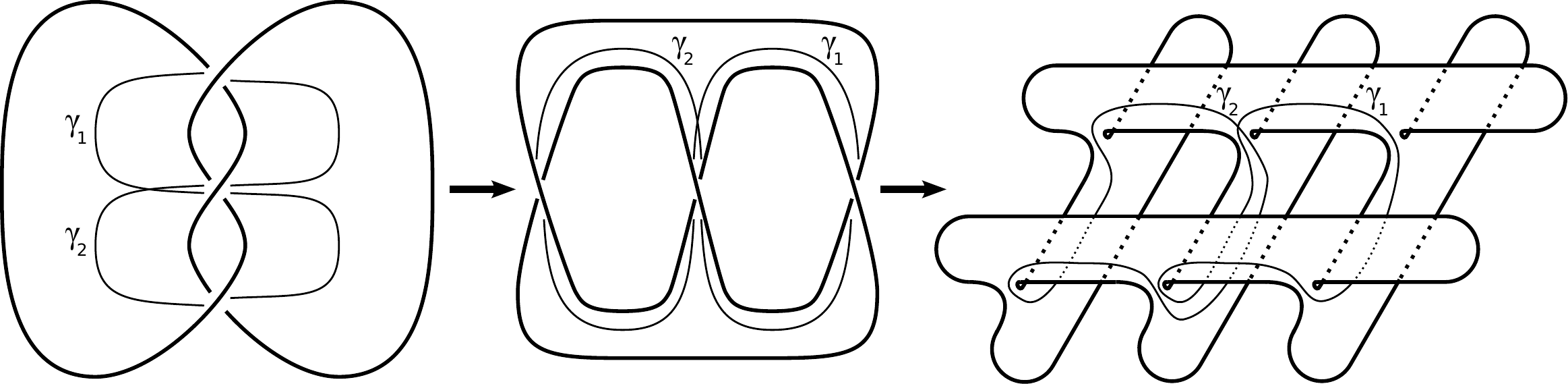}
\caption{An isotopy of the Seifert surface of a right handed trefoil}
\label{seifert_trefoil}
\end{figure}

The last diagram provides a handle decomposition of $\obar{F^2}$ where we consider the vertical and horizontal flaps as 0-handles and the connecting bands as 1-handles. Let $H,V$ be diffeomorphisms of $\obar{F^2}$ induced by the ambient isotopies $\sigma_H,\sigma_V$ respectively, see figure~\ref{monodromy_trefoil1b}. The diffeomorphism $H$ is generated by the isotopy $\sigma_H$ that slides all vertical flaps along the boundaries of the horizontal flaps counterclockwise until each vertical flap arrives at its adjacent flap, and similarly for $V$ but for the horizontal flaps. Let $G=\langle H, V \rangle$. By inspection, we see that $HV=VH$, and so $G$ is abelian. The action of $HV$ on the co-cores of the 1-handles is shown in figure~\ref{monodromy_trefoil1b}.
\begin{figure}[hbtp]
\centering
\includegraphics[scale=0.6]{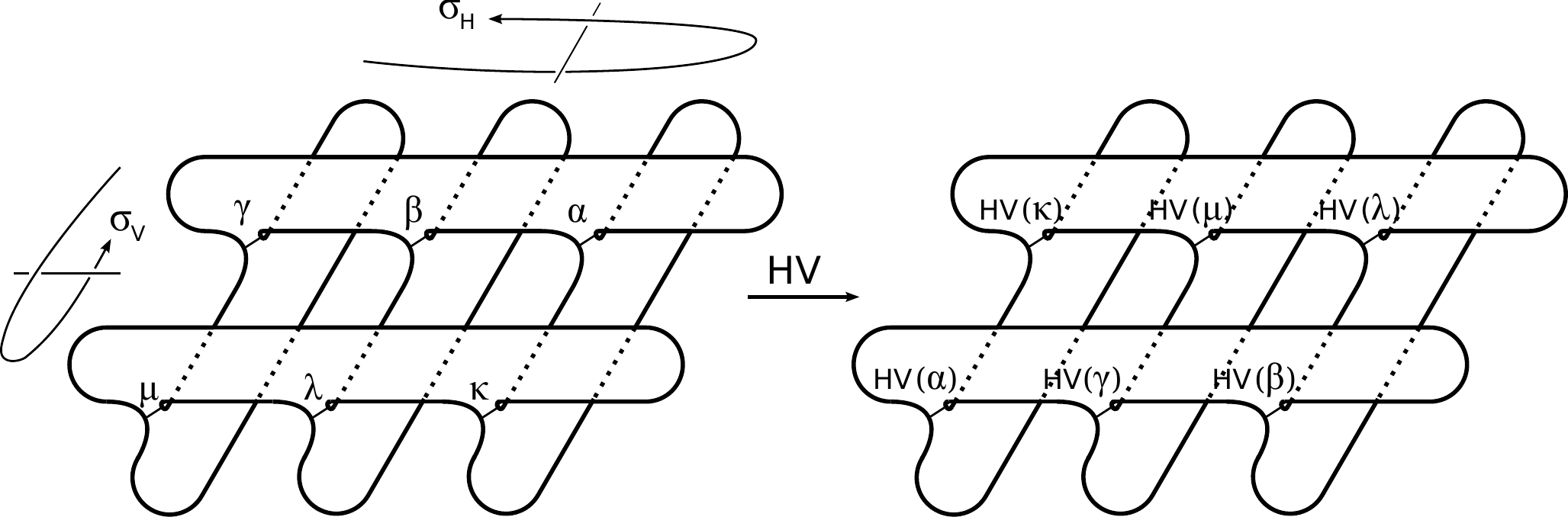}
\caption{The action of $HV$ on $\obar{F^2}$}
\label{monodromy_trefoil1b}
\end{figure}
\begin{lemma}
\label{lemma_trefoil}
The monodromy $h$ is isotopic to $HV$.
\end{lemma}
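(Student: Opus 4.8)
The plan is to compare the two diffeomorphisms of $\obar{F^2}$ through their action on a filling system of properly embedded arcs and then to invoke the Alexander method. Since $\obar{F^2}$ is the fiber surface of the right-handed trefoil, it is a once-punctured torus, and the co-cores of the $1$-handles in the handle decomposition arising from the last diagram of figure~\ref{seifert_trefoil} are properly embedded arcs whose complement in $\obar{F^2}$ is a disjoint union of disks, namely the $0$-handle flaps. Hence this collection of arcs fills the surface, and by the Alexander method two orientation-preserving diffeomorphisms of $\obar{F^2}$ are isotopic as soon as they agree, up to proper isotopy, on each of these co-cores. The whole argument is therefore reduced to matching the two maps arc-by-arc.

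First I would record the action of $HV$ on the co-cores directly from the geometry of the two flap slides, which is exactly what figure~\ref{monodromy_trefoil1b} displays: each ambient isotopy $\sigma_H$, $\sigma_V$ drags the vertical (respectively horizontal) flaps counterclockwise along the boundary of the adjacent flap, carrying the co-cores with them, and the composite $HV=VH$ produces the indicated images. Next I would compute the action of the known monodromy $h=t_{\gamma_1}t_{\gamma_2}$ on the same arcs, applying the right-handed twist $t_{\gamma_2}$ first and then $t_{\gamma_1}$, and tracking each co-core as a properly embedded arc. The crux is to verify that the resulting image, taken up to proper isotopy rel endpoints in $\partial\obar{F^2}$, coincides co-core-by-co-core with the image under $HV$ from the previous step. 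Once this matching is established, the composite $h^{-1}\circ HV$ fixes every arc of a filling system up to proper isotopy, so the Alexander method yields $h^{-1}\circ HV\simeq \Id$, that is, $h\simeq HV$.

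I expect the main obstacle to be the geometric bookkeeping that relates the two descriptions, since $H$ and $V$ are defined by ambient flap slides rather than as Dehn twists, and indeed neither is a single Dehn twist; this is precisely why $H$ and $V$ commute while $t_{\gamma_1}$ and $t_{\gamma_2}$ do not. One must check carefully that the counterclockwise sliding convention matches the right-handedness of the twists, so that the composite reproduces $t_{\gamma_1}t_{\gamma_2}$ rather than its inverse or a mismatched product. As an independent corroboration I would compute the induced automorphisms of $H_1(\obar{F^2})\cong \Z^2$: the mapping class group of the once-punctured torus acts faithfully through $SL(2,\Z)$ (the boundary Dehn twist being trivial once the boundary is free to move), so confirming that the matrices of $H$ and $V$ multiply to that of $t_{\gamma_1}t_{\gamma_2}$ would cross-check the arc computation and pin down the correct handedness.
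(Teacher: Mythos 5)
Your overall strategy --- reduce the isotopy to agreement on the filling system of co-cores and then invoke the Alexander method --- is sound and is essentially what the paper does implicitly, but the verification step is organized quite differently. You propose to push all six co-cores through the two Dehn twists by brute force; the paper instead computes the image of only two co-cores ($\alpha$ and $\kappa$) and then proves the algebraic identity $H^{-1}\tau_{\gamma_2}\tau_{\gamma_1}H=\tau_{\gamma_2}\tau_{\gamma_1}$ (via $\tau_{g(\gamma)}=g\tau_{\gamma}g^{-1}$ and $\tau_{\tau_{\gamma_2}(\gamma_1)}\tau_{\gamma_2}=\tau_{\gamma_2}\tau_{\gamma_1}$), so that conjugation by powers of $H$ transports the two computed arcs to all the others. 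For the trefoil your arc-by-arc check is perfectly feasible, but the commutation trick is what lets the same argument run verbatim for $L_{p,q}$ in Lemma~\ref{monodromy_Lpq}, where a direct check would face $pq$ co-cores and $(p-1)(q-1)$ twists. Two caveats. First, ``up to proper isotopy rel endpoints'' cannot be meant literally: $h$ is supported away from $\partial\obar{F^2}$ and fixes the endpoints of every co-core, whereas $HV$ slides each flap to the adjacent one and hence moves the endpoints, so the two images are never isotopic rel endpoints. The paper handles this by comparing $HV$ with $\rho\circ h$, where $\rho=r^2$ rotates a collar of the boundary by two of the twelve boundary arcs, and then discards $\rho$ because it is freely isotopic to the identity; you need the analogous normalization, i.e.\ free proper isotopy with endpoints allowed to slide in the boundary. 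Second, your $SL(2,\Z)$ ``cross-check'' is actually stronger than you present it: since the mapping class group of a genus-one surface with one boundary component, taken up to free isotopy, acts faithfully on $H_1\cong\Z^2$, matching the two matrices would by itself prove the lemma for the trefoil --- but this shortcut is special to genus one and does not survive to the higher-genus fibers $L_{p,q}$, which is presumably why the paper argues with arcs in a way that generalizes.
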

\begin{proof}
We will see first how the two Dehn twists $\tau_{\gamma_1},\tau_{\gamma_2}$ acts on the co-core $\alpha$.
Divide the boundary of $\obar{F^2}$ at the endpoints of the co-cores of the 1-handles to form twelve arcs.  Let $r$ be an isotopy of $\obar{F^2}$ that moves a small neighborhood of the boundary sending counterclockwise a boundary arc to its adjacent arc.
Then, as shown in figure~\ref{monodromy_trefoil1}, on a neighborhood of $\alpha$, the diffeomorphism $\phi:=\rho\circ h$ is isotopic to $HV$ where $\rho=r^2$. By a similar diagram, on a neighborhood of $\kappa$, the diffeomorphism $\phi:=\rho\circ h$ is isotopic to $HV$.

Let $\gamma_3=\tau_{\gamma_2}(\gamma_1)$.
In the theory of mapping class group, we know that $\tau_{g(\gamma)}=g\tau_{\gamma} g^{-1}$ for an element $g$ in the mapping class group of the surface $\obar{F^2}$.
Observe that
$\tau_{\tau_{\gamma_2}(\gamma_1)}\tau_{\gamma_2}
=\tau_{\gamma_2}\tau_{\gamma_1}\tau_{\gamma_2}^{-1}\tau_{\gamma_2} \imply \tau_{\gamma_3}\tau_{\gamma_2}=\tau_{\gamma_2}\tau_{\gamma_1}$.
Therefore,
\[
H^{-1}\tau_{\gamma_2}\tau_{\gamma_1} H = H^{-1}\tau_{\gamma_2} H H^{-1} \tau_{\gamma_1}H=\tau_{H^{-1}(\gamma_2)}\tau_{H^{-1}(\gamma_1)}=\tau_{\gamma_3}\tau_{\gamma_2}=\tau_{\gamma_2}\tau_{\gamma_1}.
\]
That is $H$ commutes with $h=\tau_{\gamma_2}\tau_{\gamma_1}$.

Let $\eta$ be one of the co-cores, we can use a diffeomorphism $D\in \langle H\rangle$ to move it to the location of $\alpha$ or $\kappa$ where the action of $\phi$ is known from above. Since the monodromy $h=\tau_{\gamma_2}\tau_{\gamma_1}$ commutes with $D$, on a neighborhood of $\eta$, we see that $\rho  h=\rho D^{-1}h D=D^{-1}\rho D D^{-1}h D=D^{-1}\rho h D=D^{-1}\phi D$ is isotopic to $D^{-1}HV D=HV$. Note that $D^{-1} \rho D=\rho$ because away from a neighborhood of the boundary, $\rho$ acts as the identity, and on a neighborhood of the boundary, $\rho$ and $D$ commute. Thus, $\rho \circ h$ acts as $HV$ on the 1-handles of $\obar{F^2}$.

\begin{figure}[hbtp]
\centering
\includegraphics[scale=0.6]{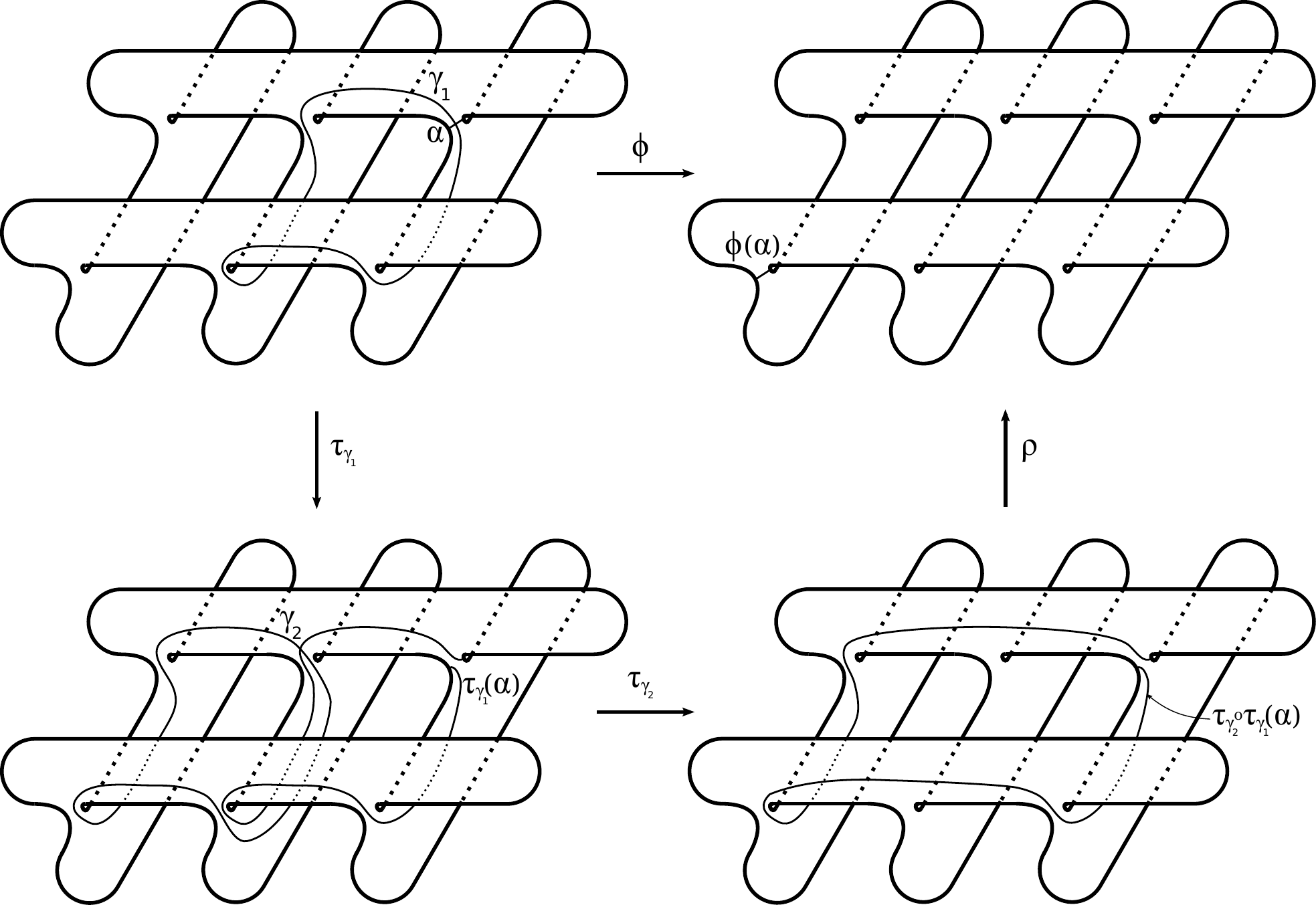}
\caption{The action of two Dehn twists on $\alpha$}
\label{monodromy_trefoil1}
\end{figure}

Note that the action of $\rho \circ h$ on the 1-handles determines the action of $\rho\circ h$ on the 0-handles. Since a row of 1-handles are mapped cyclically to the next row via $V$, and 1-handles in the same row are all connected to the same horizontal 0-handle, it follows that the map $\rho\circ h$ acts on the horizontal 0-handles via $V$.  A similar statement is true for the vertical 0-handles which are mapped cyclically via $H$.
\end{proof}

Since $\tde{h}$ acts as $\Id\times h$ on $S^1\times F^2$, we want a handle decomposition of $\obar{F^2}$ so that the map $h$ acts on handles of the same index by permutation.
In the decomposition above, $\obar{F^2}$ consists of five 0-handles and six 1-handles.
The map $\phi:=HV$ permutes the handles within the same index class as shown diagrammatically in figure~\ref{monodromy_trefoil2} where the 0-and 1-handles are labeled.
Table~\ref{orbits_trefoil} shows the orbit of the action of $\phi$ on $\obar{F^2}$.
\begin{figure}[hbtp]
\centering
\includegraphics[scale=1]{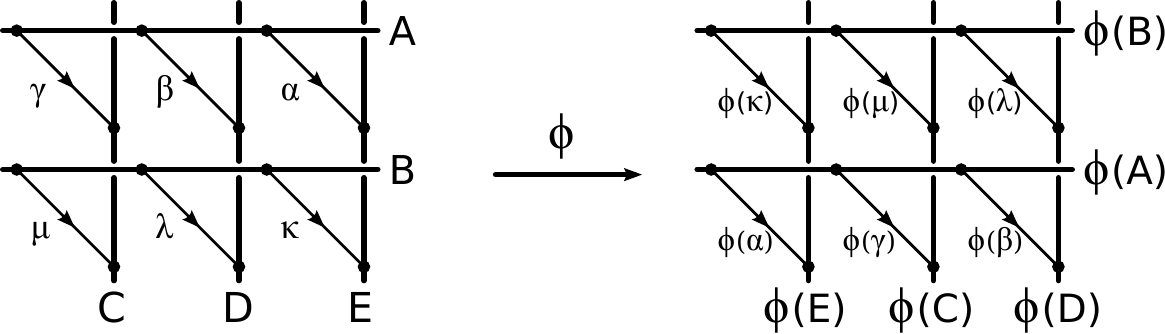}
\caption{The monodromy of a trefoil knot}
\label{monodromy_trefoil2}
\end{figure}
\begin{table}[hbtp]
\centering
\begin{tabular}{|l|c|}
\hline
0-handles & $\{A\to B\}, \{ C\to D\to E\}$ \\ \hline
1-handles & $\{\alpha\to \mu\to \beta\to \kappa\to \gamma\to \lambda\}$ \\ \hline
\end{tabular}
\caption{The orbits of the action of $\phi$ on $F^2$}
\label{orbits_trefoil}
\end{table}

Next, we will get a handle decomposition of the 3-manifold fiber $M^3=(S^1\times F^2)\cup_{\Id} (D^2\times \partial F^2)$.  Note that $\partial F^2$ is actually a trivial open arc, so $D^2\times \partial F^2$ can be considered as a 3-dimensional 2-handle.

Let us focus on the piece $S^1\times F^2$. For a $k$-handle of $F^2$, we call $(S^1\times$ $k$-handle) a spun-k-handle of $F^2$.
\begin{lemma}
A spun-0-handle of $F^2$ can be represented as a solid torus.
A spun-1-handle of $F^2$ can be represented as a (3-dimensional) 1-handle together with a (3-dimensional) 2-handle that goes over the 1-handle twice and algebraically zero times.
\end{lemma}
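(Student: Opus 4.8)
The plan is to treat the two assertions separately and, in both cases, to reduce the claim to an elementary handle picture built from a single handle of the surface $F^2$. The spun-0-handle is immediate: a 0-handle of $F^2$ is a disk $D^2$, so spinning it produces $S^1\times D^2$, a solid torus, with the core circle $S^1\times\{0\}$ being the spin of the core point of the 0-handle. The content of the lemma is therefore entirely in the second statement.

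For the spun-1-handle I would first record its product structure. A 1-handle of $F^2$ is a band $h=D^1_u\times D^1_v$ attached to the 0-handles along $\partial D^1_u\times D^1_v$. Writing $A=S^1_\theta\times D^1_u$ for the annulus swept out in the $u$-direction, the spun-1-handle is
\[
S^1_\theta\times h=(S^1_\theta\times D^1_u)\times D^1_v=A\times D^1_v,
\]
and its attaching region, namely the two annuli lying on the boundary tori of the adjacent spun-0-handles, is exactly $(\partial A)\times D^1_v$. Thus producing a relative handle decomposition of the spun-1-handle rel its attaching region is the same as producing one of the pair $(A,\partial A)$ and then crossing with $D^1_v$; since crossing a relative $k$-handle with an interval turns $D^k\times D^{m-k}$ into $D^k\times D^{m-k+1}$ and so preserves its index, it suffices to decompose $(A,\partial A)$ into one 1-handle and one 2-handle.

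To obtain this two-dimensional decomposition I would use a Morse function on $A$ that vanishes on both boundary circles and increases inward, the simplest model being the Morse--Bott function whose critical set is the core circle $S^1\times\{0\}$ with normal index $1$. A small perturbation by a Morse function on the circle splits this critical circle into exactly one index-1 and one index-2 critical point, hence one 1-handle and one 2-handle. Concretely, after thickening $\partial A$ to two collars I attach a band (the 1-handle) joining the two collars at a single value of $\theta$, turning them into a pair of pants, and then cap off the remaining inner boundary circle with the 2-handle. Crossing with $D^1_v$ converts the band into a 3-dimensional 1-handle $D^1\times D^2$ and the capping disk into a 3-dimensional 2-handle $D^2\times D^1$, giving the decomposition asserted.

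The step I expect to be the real obstacle is the count: that the 2-handle runs over the 1-handle twice and algebraically zero times. The attaching circle of the 2-handle is the inner boundary circle of the pair of pants, which runs along the inner edge of one collar, crosses the band, runs along the inner edge of the other collar, and crosses the band a second time; hence it meets the cocore of the 1-handle in two points, giving the geometric count of two. For the algebraic count I would orient everything as subsurfaces of the oriented annulus $A$ and note that the two passages across the band occur in opposite $u$-directions. Most cleanly, the attaching circle bounds the 2-handle disk and so is null-homologous in $A$; since the intersection pairing $H_1(A)\otimes H_1(A,\partial A)\to\Z$ is well defined on homology, its algebraic intersection with the cocore must vanish. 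Carrying these orientation conventions through the product with $D^1_v$ is the delicate point, but it yields precisely ``twice, algebraically zero times,'' completing the proof.
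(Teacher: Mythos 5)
Your proof is correct and follows essentially the same route as the paper: both view the spun-1-handle as the thickened annulus $(S^1\times I)\times I$ and split the annulus into a 1-handle plus a 2-handle whose attaching circle runs over the band twice. The paper delegates the splitting and the ``twice, algebraically zero'' count to a figure, whereas you supply the Morse-theoretic justification and the homological argument for the vanishing algebraic count; this adds rigor but is not a different method.
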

\begin{proof}
It is clear that a spun-0-handle is a solid torus.
For a spun-1-handle, consider figure~\ref{spun-1-handle}.  Since $S^1\times$ 1-handle is equivalent to $(S^1\times I)\times I$ which is a thickened annulus, we can split the annulus into two pieces as shown in the diagram.  Then, the piece with solid line boundaries at the two ends becomes a 1-handle while the other piece becomes a 2-handle which goes over the 1-handle twice.
\begin{figure}[hbtp]
\centering
\includegraphics[scale=0.55]{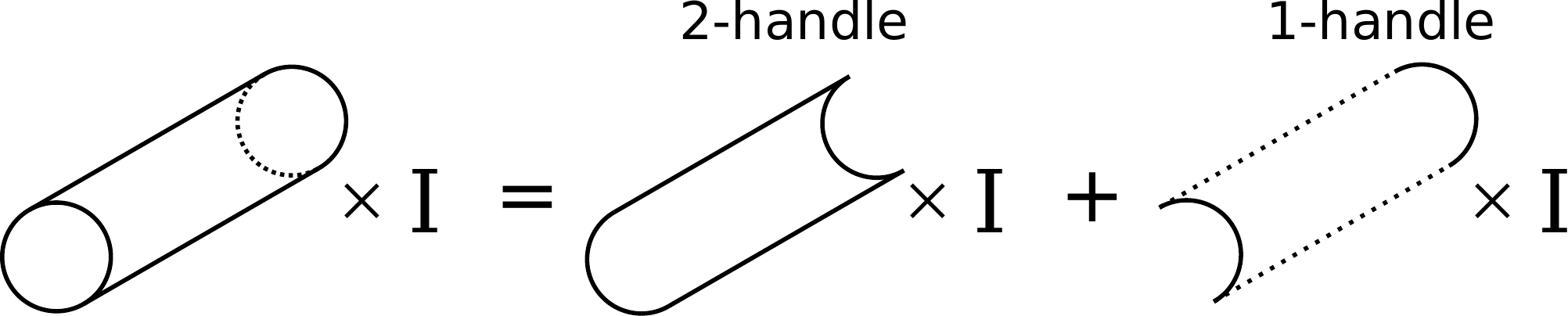}
\caption{A spun-1-handle as a 3-dimensional 1-handle together with a 2-handle}
\label{spun-1-handle}
\end{figure}
\end{proof}
In our construction, we may look at this in a slightly different way.  In our handle decomposition of $F^2$, a 1-handle always connects to some 0-handles.  Let us consider how the spun version of this looks like.  This is shown in figure~\ref{spun-1-handle2}.  The diagram on the left is a thickened strip with its front and back identified. It represents a spun-1-handle, and the two thickened disks represents the two spun-0-handles.
Note that the 2-handle goes over the 1-handle twice; once on the front side and once on the back.
It is not hard to see that the diagram in the middle is equivalent to the one on the left.
We may also represent this by the diagram on the right where the surface is thickened, and the labeled ends are identified.
\begin{figure}[hbtp]
\centering
\includegraphics[scale=0.5]{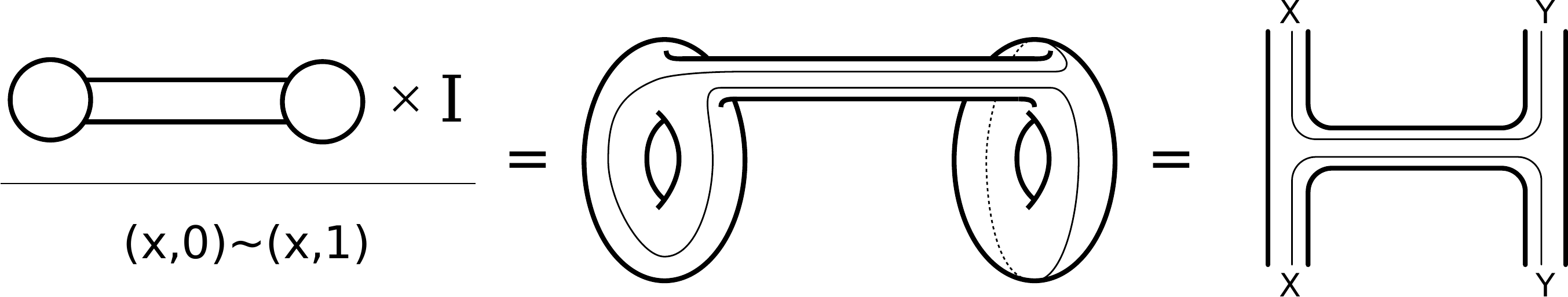}
\caption{Equivalent views of a spun-1-handle}
\label{spun-1-handle2}
\end{figure}

From this and by figure~\ref{monodromy_trefoil2}, we can construct $S^1\times \obar{F^2}$ as shown in figure~\ref{spun-structure}. Note that there are two horizontal and three vertical solid tori, and that each 2-handle goes around a horizontal and a vertical tori.
\begin{figure}[hbtp]
\centering
\includegraphics[scale=0.65]{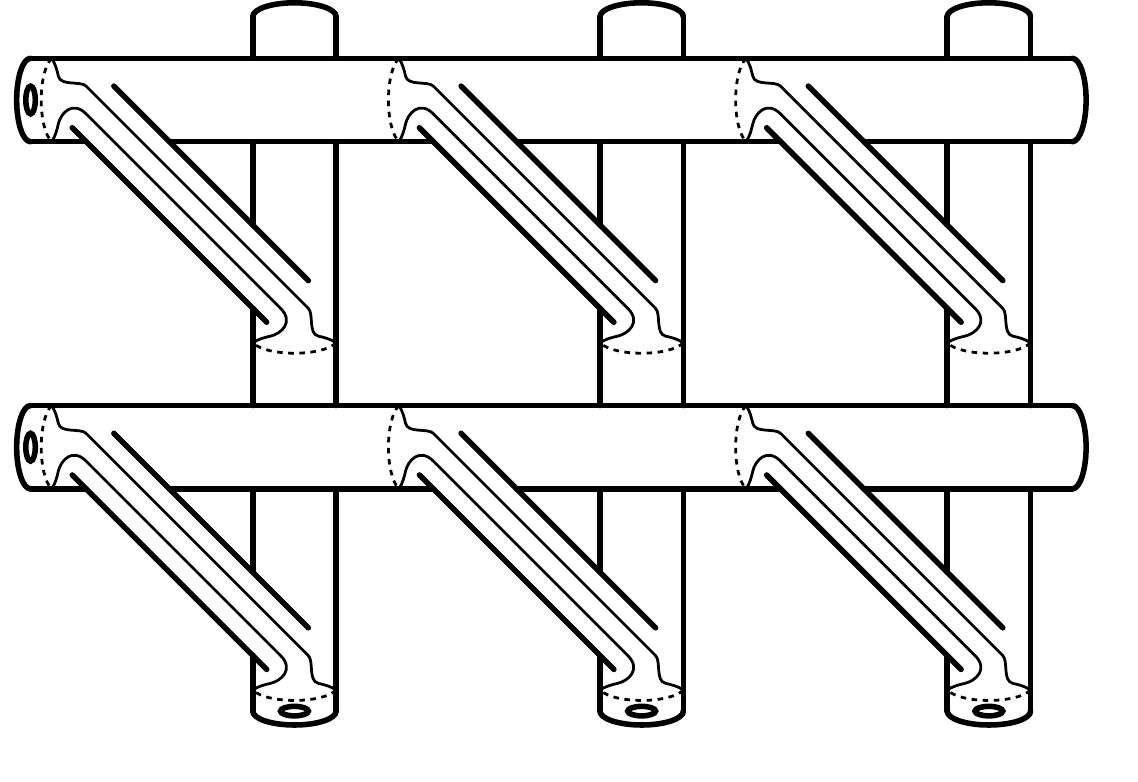}
\caption{A handlebody diagram of $S^1\times \obar{F^2}$}
\label{spun-structure}
\end{figure}

Now we can give an explicit description of the BLF of $X^4\to S^2$.  Its base diagram is shown in figure~\ref{BLF_trefoil} with the south pole at the center of the round 0-handles. Recall that the monodromy $\tde{h}$ of the bundle $X^4\to S^1$ is $\Id\times h$ on $S^1\times F^2\subset M^3$. So the action on a $k$-handle of $F^2$ carries through to the corresponding spun-$k$-handle of $F^2$.  From table~\ref{orbits_trefoil}, since there are two orbits for the action on the spun-0-handles of $F^2$, we will have two round 0-and 1-handle pairs going around the south pole two and three times respectively.  The round 0-handles can be replaced by the construction in section~\ref{roundhandle}.  For the spun-1-handles of $F^2$, each gives rise to a round 1-and 2-handle pair going around the base six times.
The remaining piece $D^2\times \partial F^2\subset M^3$ gives rise to a round 2-handle going around once because the monodromy $\phi$ on $\partial F^2$ is isotopic to the identity.
\begin{figure}[hbtp]
\centering
\includegraphics[scale=0.5]{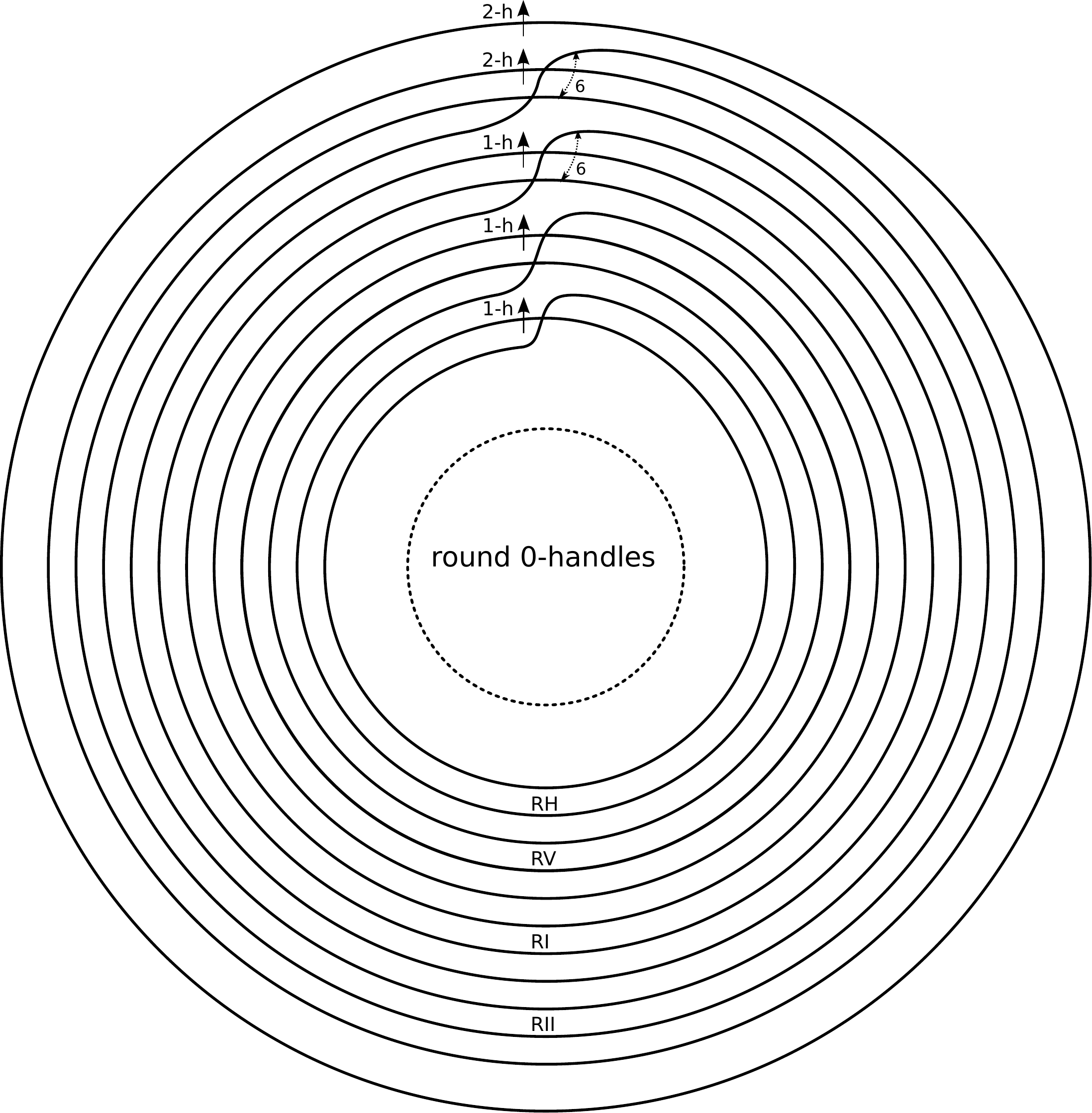}
\caption{A base diagram of a BLF of $X^4$}
\label{BLF_trefoil}
\end{figure}
The diagrams in figure~\ref{fibers_BLF_trefoil} show the fibers above some regions of the BLF where a subscript $k$ indicates the $k$-th turn of a round handle.
\begin{figure}[hbtp]
\centering
\includegraphics[scale=0.37]{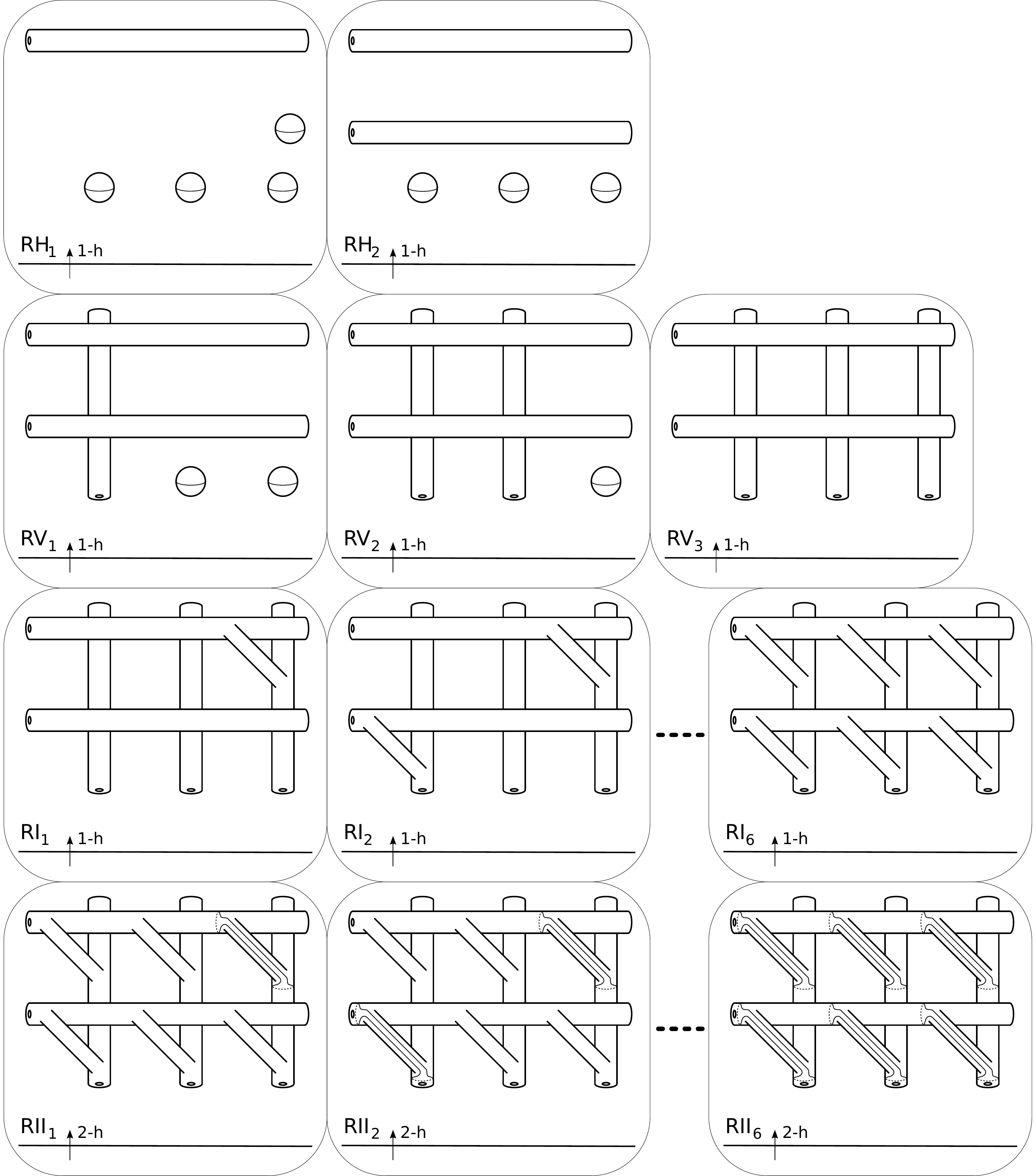}
\caption{Regular fibers between various turns of the round handles $RH, RV, RI, RII$}
\label{fibers_BLF_trefoil}
\end{figure}

\newpage
\subsection{A BLF of $S^4$ with a twist-spun trefoil knot fiber}
\label{BLF_twist_trefoil}

By Lemma~\ref{lma:twist-spun}, the complement $X^4$ of the twist-spun knot $\tde{S^2_K}$ is a bundle over $S^1$ with fiber a 3-manifold
\begin{align*}
\Big(\bigcup_{j=0}^{k-1} M^3_j \Big/\sim \Big) \bigcup (D^2\times \partial F^2)
\end{align*}
where $M^3_j=[j,j+1]\times F^2$, and $\sim$ represents the gluing data $M^3_j\ni (j+1,y)\sim(j,h(y))\in M^3_{j+1}$ for $j\in \Z/k\Z$

Using the handle decomposition of $F^2$ in section~\ref{trefoil}, $M^3_j$ can be given a handle decomposition as shown in figure~\ref{twistspun-structure}. Note that the labels in the diagram indicate how $M^3_j$ is connected to $M^3_{j\pm 1}$, and how the two handles run between $M^3_j$ and $M^3_{j\pm 1}$.  The labels with subscripts $j$ in $M^3_j=[j,j+1]\times F^2$ correspond the the side $j\times F^2$. Note also that if we made the identifications $A_j\sim B_{j+1}, B_j\sim A_{j+1}, C_j\sim D_{j+1}, D_j\sim E_{j+1}, E_j\sim C_{j+1}$, this would be exactly the handlebody of $S^1\times \obar{F^2}$ in figure~\ref{spun-structure}.
\begin{figure}[hbtp]
\centering
\includegraphics[scale=0.75]{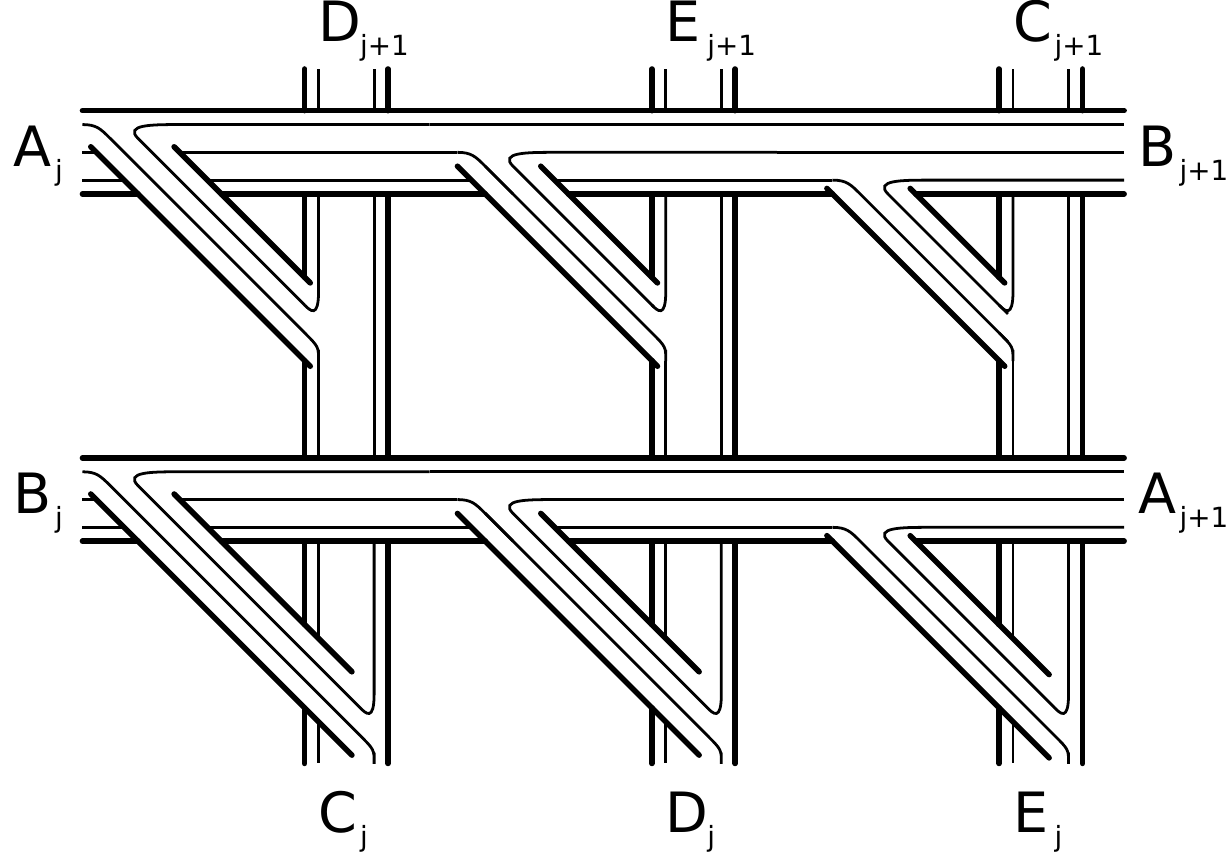}
\caption{A handle decomposition of $M^3_j$}
\label{twistspun-structure}
\end{figure}

Now, suppose we are constructing a $k$-twist-spun trefoil knot.
Since the monodromy sends $M^3_j$ to $M^3_{j-1}$ and has order $k$, it follows that each index n-th handle of $M^3_1$ gives rise to a round $n$-handle that goes around the base $k$-times. Finally, we add the piece $D^2\times \partial F^2$ which corresponds to adding a round 2-handle going around the base once.

\newpage
\subsection{A BLF of $S^4$ with a spun or twist-spun torus knot fiber}
\label{BLF_pqknot}
\begin{definition}
For relatively prime positive integers $p,q$, we define a $(p,q)$-torus knot to be the boundary of an embedded surface $L_{p,q}$ in $S^3$ with monodromy $h$, and they are determined inductively by the following procedure.
\end{definition}
Start with a positive Hopf band (or a negative Hopf band throughout for the opposite chirality) and plumb it with another one to obtain $L_{2,3}$ as in figure~\ref{plumbing-2-Hopf-bands}. The horizontal flaps should be understood to be above the page, and the vertical flaps to be below the page so that the horizontal ones are perpendicular to the vertical ones. Here, plumbing means that we choose an arc on each Hopf band and identify a small neighborhood of one to the other one transversely.  The second row of the diagram shows an intermediate step of the plumbing procedure. To get $L_{2,3}$, we slide the band connected at $b$ to $a$ passing $c$ along the boundary of the surface.  Note that $L_{2,3}$ is a Seifert surface of the right handed trefoil.  To obtain the monodromy, we first extend the monodromy of each plumbed Hopf band to its complement by identity and compose them.  So, the monodromy of $L_{2,3}$ is the composition of the two positive Dehn twists.  Since plumbing a Hopf band amounts to connect-summing a 3-sphere, it follows that the boundary of the resulting surface is still a fiber knot in $S^3$.  Now plumb a Hopf band to the leftmost vertical band of $L_{2,3}$ to obtain $L_{2,4}$.  Repeat this process to get $L_{2,q}$.
\begin{figure}[hbtp]
\centering
\includegraphics[scale=0.5]{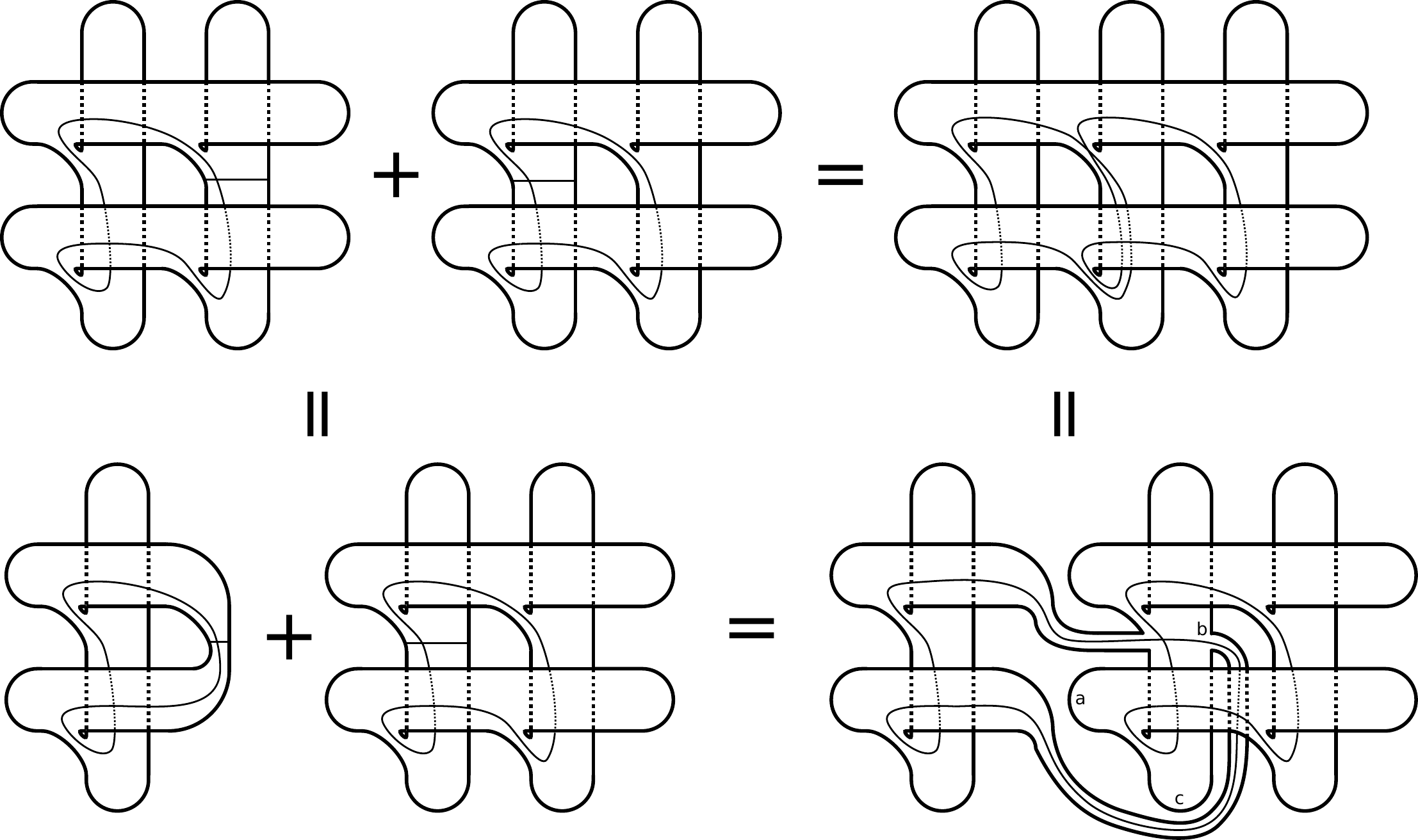}
\caption{Plumbing two positive Hopf bands}
\label{plumbing-2-Hopf-bands}
\end{figure}

To obtain $L_{3,q}$, we first plumb a Hopf band to $L_{2,q}$ along an arc on the lower-right vertical band of $L_{2,q}$.  Then, plumb another Hopf band to it along an arc on the next vertical band.  Repeat that until we obtain a new complete row of bands. And each plumbed Hopf band changes the monodromy by composing it with an extra Dehn twist. An example of $(3,4)$-torus knot is shown in figure~\ref{torus-knot-3-4}.
\begin{figure}[hbtp]
\centering
\includegraphics[scale=0.5]{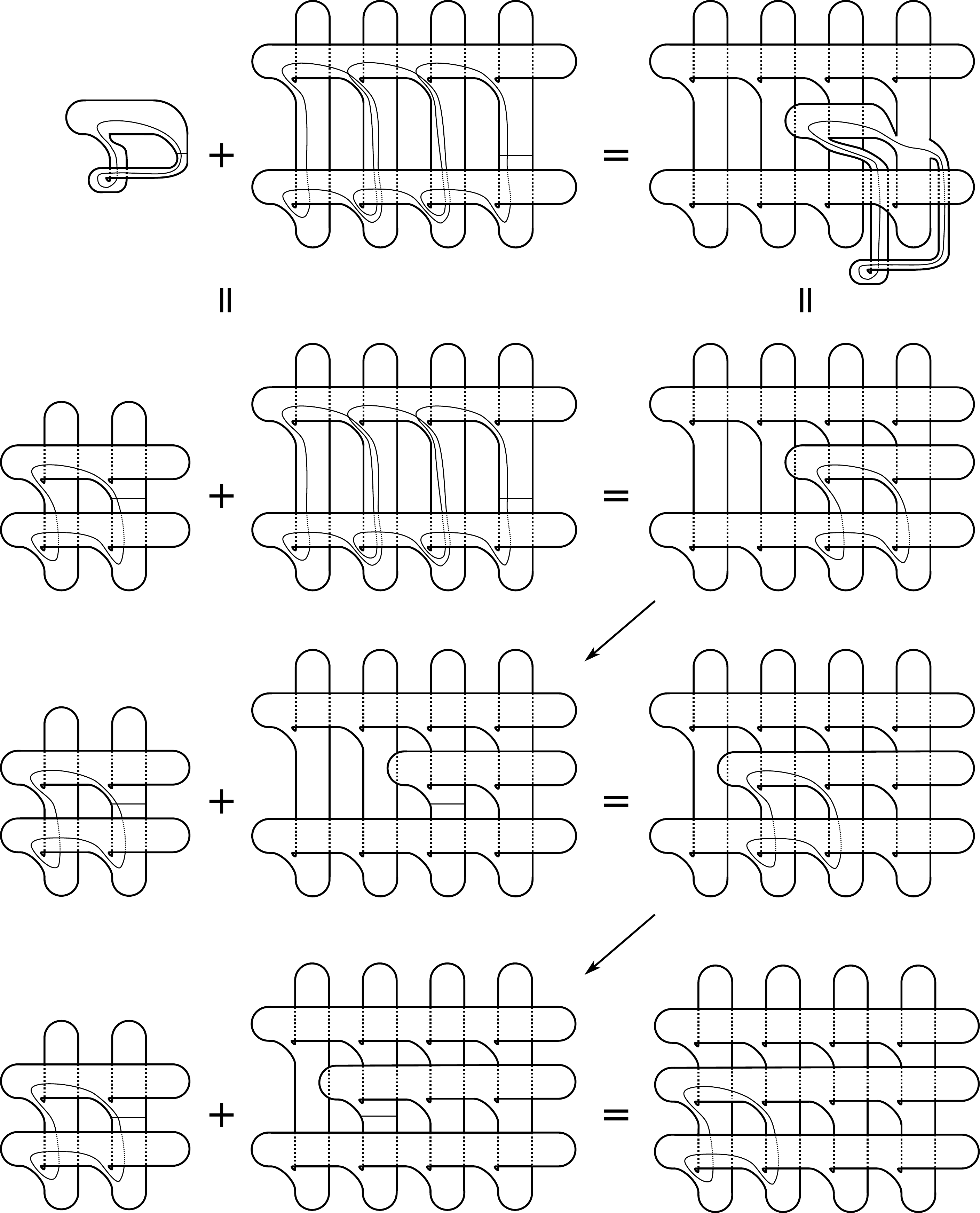}
\caption{Constructing a Seifert surface of a (3,4)-torus knot}
\label{torus-knot-3-4}
\end{figure}

To obtain $L_{p,q}$, we repeat the above procedure to add as many rows as necessary.
By a theorem in \cite{Ozbagci_contact}, $L_{p,q}$ is indeed a Seifert surface of a $(p,q)$-torus knot.

\subsection{Main construction}
\newtheorem*{thm:main}{Theorem \ref{thm:main}}
\begin{thm:main}
A broken Lefschetz fibration of $S^4$ over $S^2$ with a spun or twist-spun torus knot fiber can be constructed explicitly.
\end{thm:main}

\begin{proof}
From our definition, it is clear that the monodromy $h$ of a $(p,q)$-torus knot $K_{p,q}$ is a product of $(p-1)(q-1)$ non-separating Dehn twists.  To build a BLF of its complement, we want to understand how the monodromy $h$ acts on the Seifert surface $L_{p,q}$. Follow the notations for the case of a trefoil knot in lemma~\ref{lemma_trefoil}, we have the following.
\begin{lemma}
The monodromy $h$ is isotopic to $HV$.
\label{monodromy_Lpq}
\end{lemma}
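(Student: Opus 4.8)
The plan is to generalize the argument of Lemma~\ref{lemma_trefoil} from the trefoil ($L_{2,3}$) to an arbitrary $(p,q)$-torus knot. Recall the setup: the surface $L_{p,q}$ is built as a rectangular array of horizontal and vertical flaps joined by bands, where each band carries a positive Dehn twist curve, and the monodromy $h$ is the product of all $(p-1)(q-1)$ such Dehn twists $\tau_{\gamma_i}$. The diffeomorphisms $H$ and $V$ are defined exactly as before: $H$ slides every vertical flap counterclockwise along the boundaries of the horizontal flaps to its adjacent position, and $V$ does the same for horizontal flaps. First I would verify the two basic structural facts that make the trefoil proof work and check that they persist in the general case, namely that $H$ and $V$ commute (so $G=\langle H,V\rangle$ is abelian) and that both commute with $h$.

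The heart of the argument is the commutation $H h = h H$, which in the trefoil case was proved using the braid relation $\tau_{\tau_{\gamma_2}(\gamma_1)}\tau_{\gamma_2}=\tau_{\gamma_2}\tau_{\gamma_1}$ together with $\gamma_3=\tau_{\gamma_2}(\gamma_1)$. For the general torus knot the key observation should be that $H$ permutes the set of Dehn twist curves $\{\gamma_i\}$ in a way compatible with the product structure of $h$. Concretely, using $\tau_{g(\gamma)}=g\tau_\gamma g^{-1}$, conjugating $h$ by $H$ replaces each $\tau_{\gamma_i}$ by $\tau_{H(\gamma_i)}$, and I expect $H(\gamma_i)$ to again be one of the twist curves (possibly $\tau_{\gamma_j}(\gamma_k)$ for adjacent indices), so that the conjugated product telescopes back to $h$ via repeated braid relations — exactly as in the trefoil computation, but iterated over the whole array. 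The symmetric statement holds for $V$.

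Next I would establish the local computation: on a neighborhood of a single co-core $\alpha$ of a 1-handle, $\rho\circ h$ is isotopic to $HV$, where $\rho$ is the appropriate power of the boundary-rotation isotopy $r$ needed to compensate for the shift $H$ and $V$ induce near $\partial L_{p,q}$. This is the analogue of figure~\ref{monodromy_trefoil1}, where one simply tracks how the composed Dehn twists push $\alpha$ across the flaps. Then, exactly as in the trefoil proof, I would propagate this local identity to every co-core by transport: given any co-core $\eta$, choose $D\in\langle H,V\rangle$ moving $\eta$ to the standard position $\alpha$; since $h$ commutes with $D$ and $\rho$ is central (it acts as identity away from the boundary and commutes with $D$ near the boundary), the chain $\rho h = D^{-1}\rho h D = D^{-1}\phi D \simeq D^{-1}HV D = HV$ gives the result on a neighborhood of $\eta$. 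Because the action on 1-handles determines the action on the 0-handles (each row of bands attaches to a common horizontal flap, each column to a common vertical flap), this pins down $\rho\circ h$ as $HV$ on all handles, hence $h\simeq HV$ after absorbing $\rho$.

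The main obstacle I anticipate is the commutation step $Hh=hH$ (and $Vh=hV$) in full generality: for the trefoil there are only two twist curves and a single braid relation suffices, but for $L_{p,q}$ there are $(p-1)(q-1)$ curves arranged in a grid, and I must show that conjugating the full ordered product by $H$ rearranges the factors back to the same product. This requires care about the ordering of the Dehn twists and about which pairs of curves intersect, so that the braid and commutation relations in the mapping class group can be applied in the correct sequence to telescope the conjugated product. I expect this to reduce to a clean inductive claim — that $H$ cyclically permutes each column of twist curves while fixing the product up to the braid relations — mirroring the inductive plumbing construction of $L_{p,q}$ itself, so that the trefoil case is the base step and adding a row or column is the inductive step.
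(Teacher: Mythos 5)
Your plan matches the paper's proof essentially step for step: the local check of $\rho\circ h\simeq HV$ near a standard co-core, the commutation $Hh=hH$ obtained by conjugating the ordered product row by row and telescoping via $\tau_{g(\gamma)}=g\tau_\gamma g^{-1}$ (the paper does exactly the row-wise computation $H^{-1}\tau_{\textrm{1st row}}H=\tau_{\beta_0}\tau_{\gamma_{0,q-1}}\cdots\tau_{\gamma_{0,1}}=\tau_{\textrm{1st row}}$ that you anticipate), the transport of the local identity by elements of $\langle H\rangle$, and the determination of the $0$-handle action from the $1$-handle action. The only cosmetic difference is that the paper sidesteps proving $Vh=hV$ by verifying the local computation directly on a representative co-core $\alpha_{i,0}$ in each row and then transporting only by powers of $H$.
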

\begin{proof}[Proof of Lemma~\ref{monodromy_Lpq}]
We will see first how the Dehn twists act on the arc $\alpha_{0,0}$, see figure~\ref{monodromy_torus-knot1} where $\tau_{\textrm{1st row}}=\tau_{\gamma_{0,q-1}}\ldots\tau_{\gamma_{0,0}}$ and $\tau_{\textrm{2nd row}}=\tau_{\gamma_{1,q-1}}\ldots\tau_{\gamma_{1,0}}$. Note that the other curves $\gamma_{i,j}$ with $i>1$ are disjoint from $\tau_{\textrm{2nd row}}\tau_{\textrm{1st row}}(\alpha_{0,0})$. Therefore, $\phi:=\rho\circ h$ is isotopic to $HV$ on a neighborhood of $\alpha_{0,0}$. A similar diagram shows that $\phi:=\rho\circ h$ is isotopic to $HV$ on $\alpha_{i,0}$ for $i\in \Z_p$.

Let $\beta_0:=\tau_{\gamma_{0,q-1}}\ldots\tau_{\gamma_{0,1}}(\gamma_{0,0})$ and $\xi=\tau_{\gamma_{0,q-1}}\ldots\tau_{\gamma_{0,1}}$. Observe that
\begin{align*}
\tau_{\beta_0}\tau_{\gamma_{0,q-1}}\ldots\tau_{\gamma_{0,1}}
&=\tau_{\xi(\gamma_{0,0})}\tau_{\xi} \\
&=\tau_{\xi}\tau_{\gamma_{0,0}}\tau_{\xi}^{-1}\tau_{\xi} \\
&=\tau_{\xi}\tau_{\gamma_{0,0}}. \\
&=\tau_{\textrm{1st row}}
\end{align*}
Therefore,
\begin{align*}
H^{-1}\tau_{\textrm{1st row}}H
&= H^{-1}\tau_{\gamma_{0,q-1}}\ldots\tau_{\gamma_{0,0}} H \\
&= H^{-1}\tau_{\gamma_{0,q-1}} H \ldots H^{-1}\tau_{\gamma_{0,0}} H \\
&= \tau_{H^{-1}(\gamma_{0,q-1})}\tau_{H^{-1}(\gamma_{0,q-2})}\ldots \tau_{H^{-1}(\gamma_{0,0})} \\
&= \tau_{\beta_0}\tau_{\gamma_{0,q-1}}\ldots \tau_{\gamma_{0,1}} \\
&= \tau_{\textrm{1st row}}.
\end{align*}
That is $H$ commutes with $\tau_{\textrm{1st row}}$. A similar computation shows that $H$ commutes with $\tau_{\textrm{k-th row}}$. Therefore, $H$ commutes with $\tau_{\textrm{(k+1)-th row}}\tau_{\textrm{k-th row}}$.
The rest of the argument follows as in lemma~\ref{lemma_trefoil}.
\begin{figure}[hbtp]
\centering
\includegraphics[scale=0.57]{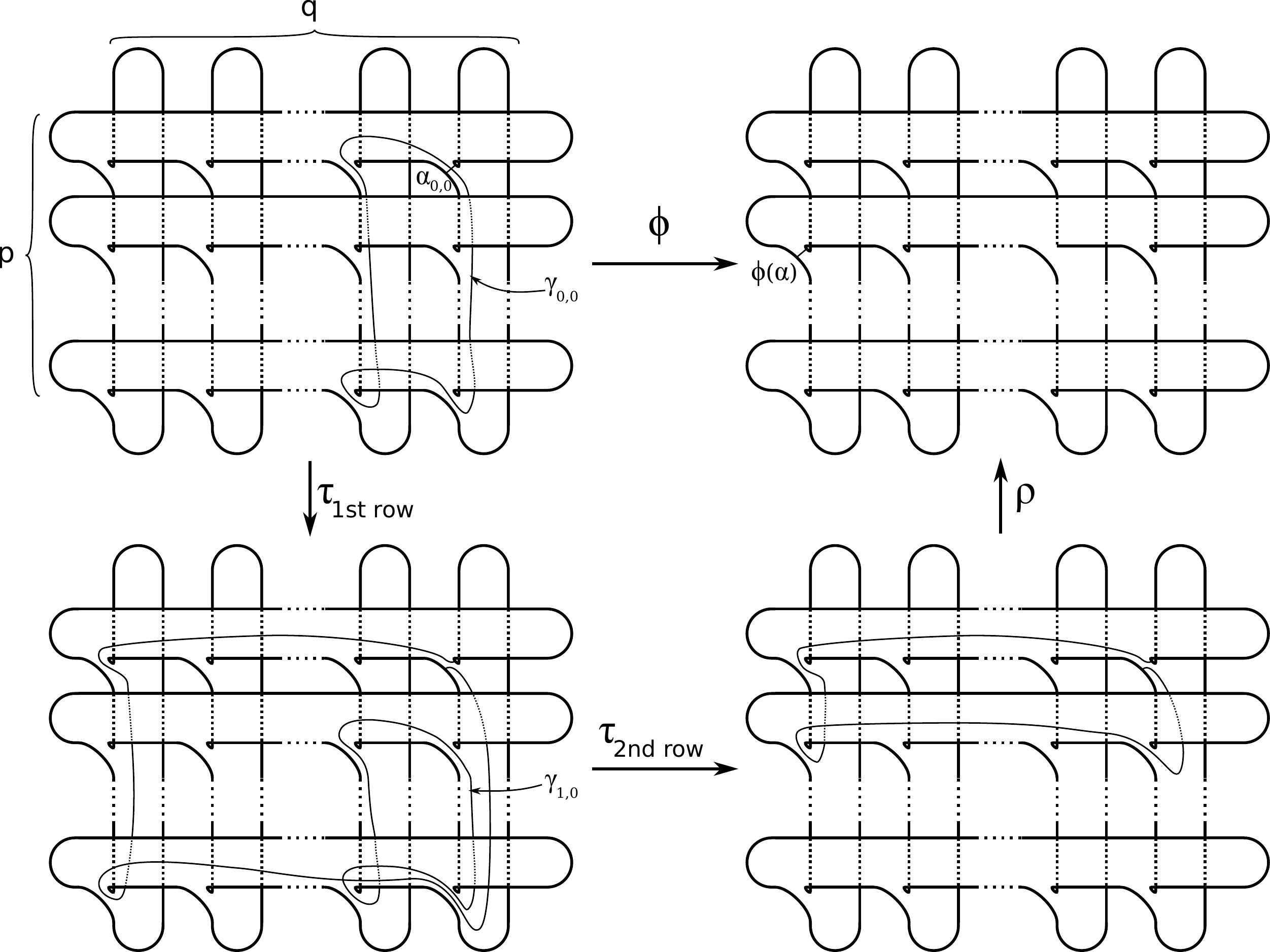}
\caption{The action of $\tau_{\textrm{2nd row}} \tau_{\textrm{1st row}}$ on the arc $\alpha_{0,0}$}
\label{monodromy_torus-knot1}
\end{figure}
\end{proof}

We can consider the horizontal and vertical flaps as the 0-handles and the connecting bands as 1-handles. Then the action of $\phi:=HV$ permutes the handles within the same the index class.
\begin{figure}[hbtp]
\centering
\includegraphics[scale=0.57]{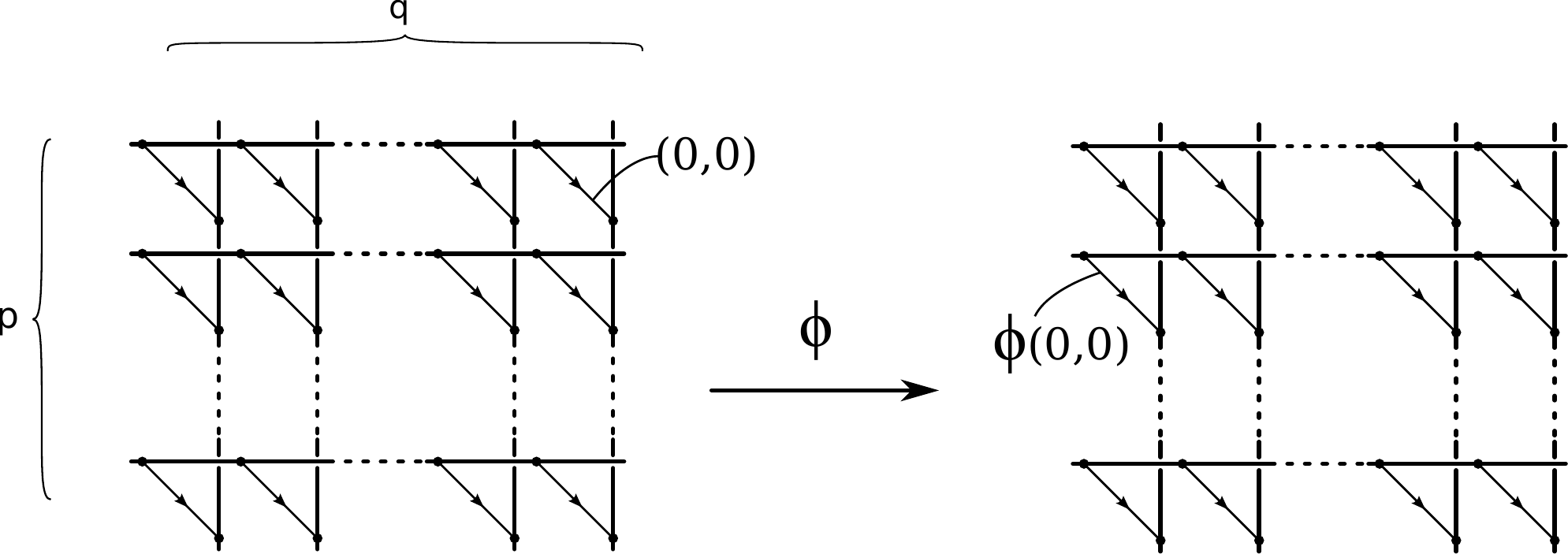}
\caption{The monodromy of a $(p,q)$-torus knot}
\label{monodromy-Lpq}
\end{figure}
The graphs in figure~\ref{monodromy-Lpq} show how the map $\phi$ sends the 0-and 1-handles.  If we label the 1-handles by $(m,n)\in\Z_p\times \Z_q$, then $\phi(m,n)=(m+1,n-1)$ which has order $pq$ since $p,q$ are coprime.  Also $\phi$ sends the $m$-th row to the $(m+1)$-th row because $\phi(\{(m,n)\ |\ n\in\Z_q\})=\{(m+1,n)\ |\ n\in\Z_q\}$; similarly, $\phi$ sends the $n$-th column to the $(n-1)$-th column. So, there are an orbit of length $p$ for the horizontal 0-handles and an orbit of length $q$ for the vertical 0-handles under the action $\phi$.

With this information, we can construct a BLF of the complement $X_{p,q}$ of $K_{p,q}$.  The orbits of the spun-0-handles gives rise to two round 0-and 1-handle pairs going around the south pole $p$ and $q$ times respectively.  The round 0-handles can be replaced by the construction in section~\ref{roundhandle}.  The orbit of the spun-1-handles gives rise to a round 1-and 2-handle pair $RI,RII$ going around the base $pq$ times.  Finally, we add a round 2-handle corresponding to the piece $D^2\times \partial F^2_{p,q}$ where $F^2_{p,q}$ is the half-open Seifert surface of $K_{p,q}$ as discussed in section~\ref{complement}.

Let us describe a regular fiber after each turn of a round-handle as we go from the south pole to the north.  After adding the round 0-handles around the south pole, the fibers are $p+q$ disjoint spheres. The $k$-th turn of $RH$ corresponds to changing the $k$-th ``horizontal'' sphere into a torus.  Therefore, a regular fiber after adding $RH$ and $RV$ consists of $p+q$ disjoint tori.  The $k$-th turn of $RI$ corresponds to joining the $\pi_1\circ \phi^k(0,0)$ horizontal torus with the $\pi_2\circ\phi^k(0,0)$ vertical torus.  A regular fiber after adding $RII$ is shown in figure~\ref{fibers_BLF_torus-knot}.
\begin{table}[hbtp]
\centering
\begin{tabular}{|c|c|}
\hline
The $k$-th turn of & Modification to the fiber at each turn \\ \hline \hline
$RH$ & Turning the ``horizontal'' $k$-th sphere into a torus \\ \hline
$RV$ & Turning the ``vertical'' $k$-th sphere into a torus \\ \hline
$RI$ & Joining the $\pi_1\circ \phi^k(0,0)$ horizontal torus \\ & with the $\pi_2\circ \phi^k(0,0)$ vertical torus \\ \hline
$RII$ & Collapsing along the vanishing cycle \\ & that goes over the 1-handle $\phi^k(0,0)$ \\ \hline
\end{tabular}
\caption{Modification to a regular fiber after each turn}
\label{BLF_folds}
\end{table}
\begin{figure}[hbtp]
\centering
\includegraphics[scale=0.5]{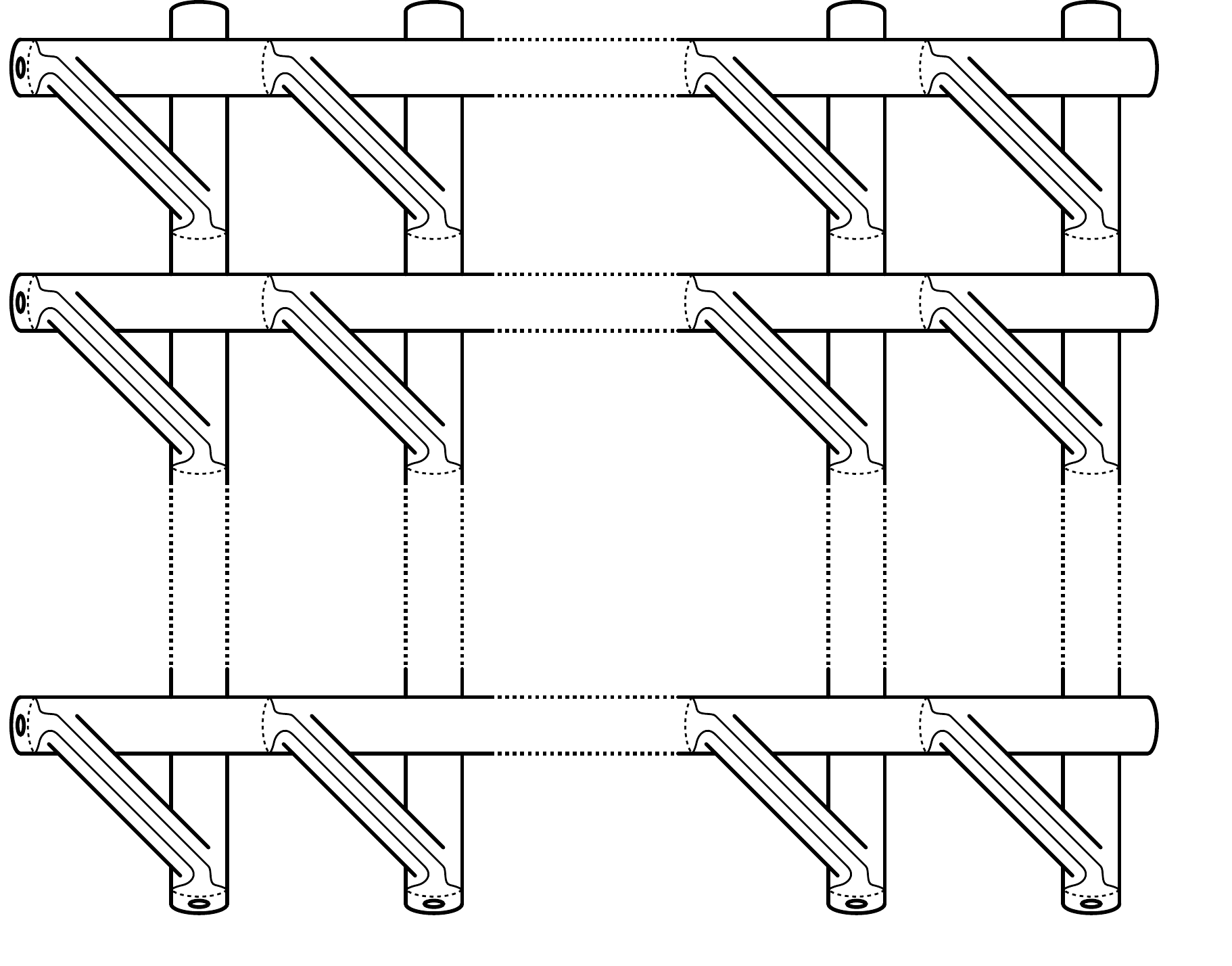}
\caption{A regular fiber after adding $RH, RV, RI, RII$}
\label{fibers_BLF_torus-knot}
\end{figure}

For a $k$-twist-spun torus knot, the construction is a similar extension to the case for a twist-spun trefoil knot.
Each index $n$-th handle of $M^3_1$ gives rise to a round $n$-handle that goes around the base $k$ times.
\end{proof}

\newpage
\section{Further questions}
The construction in this paper relies on the fiber bundle structure of a spun or twist-spun knot and the symmetry of the Seifert surface of a torus knot. It leads to some obvious questions.
\begin{itemize}
\item[1.] How can we construct explicitly a BLF of $S^4$ for other spun or twist-spun knot, or other 2-knot fiber?
\item[2.] Can similar techniques be used in the construction of a BLF of $S^4$ with a spun link fiber?
\end{itemize}

\newpage
\bibliography{references}

\begin{thebibliography}{10}

\bibitem{Akbulut}
Selman Akbulut and {\c{C}}a{\u{g}}r\i\ Karakurt.
\newblock Every 4-manifold is {BLF}.
\newblock {\em J. G\"okova Geom. Topol. GGT}, 2:83--106, 2008.

\bibitem{Artin}
E.~Artin.
\newblock Zur isotopic zweidimensionaler {F}l{\"a}chen im $\mathbb{R}^4$.
\newblock {\em Abh. Math. Sem. Univ. Hamburg}, 4:174--177, 1925.

\bibitem{Auroux}
Denis Auroux, Simon~K. Donaldson, and Ludmil Katzarkov.
\newblock Singular {L}efschetz pencils.
\newblock {\em Geom. Topol.}, 9:1043--1114, 2005.

\bibitem{Baykur}
Refik~{\.I}nan{\c{c}} Baykur.
\newblock Existence of broken {L}efschetz fibrations.
\newblock {\em Int. Math. Res. Not. IMRN}, Art. ID rnn101, 2008.

\bibitem{Cerf}
Jean Cerf.
\newblock La stratification naturelle des espaces de fonctions
  diff\'erentiables r\'eelles et le th\'eor\`eme de la pseudo-isotopie.
\newblock {\em Inst. Hautes \'Etudes Sci. Publ. Math.}, (39):5--173, 1970.

\bibitem{Gay1}
David~T. Gay and Robion Kirby.
\newblock Constructing {L}efschetz-type fibrations on four-manifolds.
\newblock {\em Geom. Topol.}, 11:2075--2115, 2007.

\bibitem{Gay2}
David~T. Gay and Robion Kirby.
\newblock Fiber connected, indefinite {M}orse 2-functions on connected
  n-manifolds.
\newblock {\em ArXiv e-prints}, February 2011, 1102.2169v2.

\bibitem{Gay3}
David~T. Gay and Robion Kirby.
\newblock Indefinite {M}orse 2-functions; broken fibrations and
  generalizations.
\newblock {\em ArXiv e-prints}, February 2011, 1102.0750v2.

\bibitem{Gordon}
C.~McA. Gordon.
\newblock Knots in the {$4$}-sphere.
\newblock {\em Comment. Math. Helv.}, 51(4):585--596, 1976.

\bibitem{Lekili_wrinkled}
Yanki Lekili.
\newblock Wrinkled fibrations on near-symplectic manifolds.
\newblock {\em Geom. Topol.}, 13(1):277--318, 2009.
\newblock Appendix B by R. {\.I}nan{\c{c}} Baykur.

\bibitem{Ozbagci_contact}
Burak Ozbagci.
\newblock A note on contact surgery diagrams.
\newblock {\em Internat. J. Math.}, 16(1):87--99, 2005.

\bibitem{Saeki}
Osamu Saeki.
\newblock Elimination of definite fold.
\newblock {\em Kyushu J. Math.}, 60(2):363--382, 2006.

\bibitem{Williams}
Jonathan Williams.
\newblock The {$h$}-principle for broken {L}efschetz fibrations.
\newblock {\em Geom. Topol.}, 14(2):1015--1061, 2010.

\bibitem{Zeeman}
E.~C. Zeeman.
\newblock Twisting spun knots.
\newblock {\em Trans. Amer. Math. Soc.}, 115:471--495, 1965.

\end{thebibliography}

\bibliographystyle{hplain}

\end{document}